\definecolor{darkred}{rgb}{.7,0,0}
\definecolor{green}{rgb}{0,0.7,0}
\def \endproof{\vrule height8pt width 5pt depth 0pt}
\newtheoremstyle{thmm}{1.5ex plus 1ex minus .2ex}{1.5ex plus 1ex minus.2ex}{\rmfamily}{}{\bfseries}{}{1em}{} \theoremstyle{thmm}
\newtheorem{theorem}{Theorem}[section]
\newtheorem{assumption}{Assumption}[section]
\newtheorem{lemma}{Lemma}[section]
\newtheorem{definition}{Definition}[section]
\newtheorem{proposition}{Proposition}[section]
\newtheorem{exercise}{Exercise}[section]
\newtheorem{remark}{Remark}[section]
\renewenvironment{proof}[1][Proof]{\noindent\textit{#1. }
}{\hfill$\square$}
\renewcommand{\theequation}{\thesection.\arabic{equation}}
\newcommand{\vertiii}[1]
{{\left\vert\kern-0.25ex\left
\vert\kern-0.25ex\left\vert #1
    \right\vert\kern-0.25ex\right
\vert\kern-0.25ex\right\vert}}
\def\d{{\mathrm d}}
\def\R {{\mathbb R}}
\def\R{\mathbb{R}}
\def\O{\mathcal{O}}
\def\Z{\mathbb{Z}}
\def\d{\mathrm{d}}
\def\eps{\varepsilon}
\def\Z{{\mathbb Z}}
\def\le{\leqslant}
\def\eps{\varepsilon}
\def\ge{\geqslant}
\def\Omega{\varOmega}
\def\Om{\varOmega}
\def\Delta{\varDelta}
\def\bex{\begin{exercise}\upshape}
\def\eex{\end{exercise}}
\def\be{\begin{equation}}
\def\en{\end{equation}}
\def\ben{\begin{equation*}}
\def\enn{\end{equation*}}
\title{\Large
Improved error estimates for a modified exponential Euler method\\
for the semilinear stochastic heat equation with rough initial data\thanks{This work is supported in part by the National Natural Science Foundation of China (NSFC grants 12071020, 12131005, U2230402), the Research Grants Council of Hong Kong (GRF Project No. PolyU15300519), and an internal grant of The Hong Kong Polytechnic University (Project ID: P0038843, Work Programme: ZVX7).}
\\[10pt]
}
\author{Xinping Gui\thanks{Beijing Computational Science Research Center, Beijing 100193, China. 
E-mail address: gui@csrc.ac.cn}
,
\quad
Buyang Li\thanks{Department of Applied Mathematics, The Hong Kong Polytechnic University, Hung Hom, Hong Kong.
Email address: buyang.li@polyu.edu.hk}
,
\quad
Jilu Wang\thanks{School of Science, Harbin Institute of Technology, Shenzhen 518055, China. E-mail address: wangjilu@hit.edu.cn}
}
\date{}
\begin{document}

\maketitle

\thispagestyle{fancy}
\fancyhead[RO,LE]{Accepted for publication by Science China Mathematics}
\vspace{-10pt}

\begin{abstract}
A class of stochastic Besov spaces $B^p L^2(\Omega;\dot H^\alpha(\mathcal{O}))$, $1\le p\le\infty$ and $\alpha\in[-2,2]$, is introduced to characterize the regularity of the noise in the semilinear stochastic heat equation 
\begin{equation*}
\d u -\Delta u \d t =f(u) \d t +\d W(t) ,
\end{equation*}
%
under the following conditions for some $\alpha\in(0,1]$: 
$$
\Big\| \int_0^te^{-(t-s)A}\d W(s) \Big\|_{L^2(\Omega;L^2(\mathcal{O}))} \le C t^{\frac{\alpha}{2}}
\quad\mbox{and}\quad
\Big\| \int_0^te^{-(t-s)A}\d W(s) \Big\|_{B^\infty L^2(\Omega;\dot H^\alpha(\mathcal{O}))}\le C.
$$
The conditions above are shown to be satisfied by both trace-class noises (with $\alpha=1$) and one-dimensional space-time white noises (with $\alpha=\frac12$). The latter would fail to satisfy the conditions with $\alpha=\frac12$ if the stochastic Besov norm $\|\cdot\|_{B^\infty L^2(\Omega;\dot H^\alpha(\mathcal{O}))}$ is replaced by the classical Sobolev norm $\|\cdot\|_{L^2(\Omega;\dot H^\alpha(\mathcal{O}))}$, and this often causes reduction of the convergence order in the numerical analysis of the semilinear stochastic heat equation. 
In this article, the convergence of a modified exponential Euler method, with a spectral method for spatial discretization, is proved to have order $\alpha$ in both time and space 
for possibly nonsmooth initial data in $L^4(\Omega;\dot{H}^{\beta}(\O))$ with $\beta>-1$, 
by utilizing the real interpolation properties of the stochastic Besov spaces and a class of locally refined  stepsizes to resolve the singularity of the solution at $t=0$.  \\

\noindent{\bf Key words:}$\,\,$ 
semilinear stochastic heat equation,
additive noise, space-time white noise, exponential Euler method, spectral method, strong convergence, stochastic Besov space, real interpolation \\

\noindent{\bf MSC (2020):} 65M15, 60H35, 65C30, 35R60
\end{abstract}



\setlength\abovedisplayskip{4pt}
\setlength\belowdisplayskip{4pt}

\section{Introduction}\label{Se:1}
We consider the numerical approximation to the semilinear stochastic heat equation
\begin{equation}
\label{SPDE1}
\left \{
\begin{aligned}
\d u -\Delta u \d t&=f(u) \d t +\d W(t) \quad\mbox{for}\,\,\, t\in(0,T] ,\\
u(0)&=u^0 , 
\end{aligned}
\right .
\end{equation}
in a convex polygonal domain $\O\subset\R^d$, $d\ge 1$, up to a given time $T>0$,  
with a given nonlinear drift
function $f:\R\rightarrow\R$ 
and a given initial value $u^0$, 
where $\Delta:H^1_0(\O)\cap H^2(\O)\rightarrow L^2(\O)$ is the Dirichlet Laplacian operator, and 
$W(t)$ is an $L^2(\mathcal{O})$-valued $Q$-Wiener process on a probability space $(\Om,\mathcal{F},\mathbb{P})$ with a normal filtration $\{\mathcal{F}_t\}_{t\ge 0}$. In particular, $W(t)$ has the following expression: 
\begin{align}
W(t) 
=& \sum_{k=1}^\infty \sqrt{\mu_k} \phi_k W_k(t),
\end{align}
where $W_k(t)$, $k=1, 2, \dots$, are real-valued independent Brownian motions, and $\phi_k$, $k=1, 2, \dots$, are common eigenfunctions of the operators $Q$ and $A=-\Delta$, i.e., 
 $$
Q\phi_k=\mu_k\phi_k\quad \mbox{and}\quad A\phi_k=\lambda_k\phi_k \quad\mbox{for}\,\,\, k=1,2,\dots
$$
It is well known that ${\rm tr}(Q)=\sum_{k=1}^{\infty}\mu_k<\infty$ decides a genuine Wiener process which determines a trace-class noise, and $Q=I$ gives a cylindrical Wiener process which determines a space-time white noise; see Prato \& Zabczyk \cite{Da-Prato1992} for more details. 
The trace-class noise is much smoother than the space-time white noise, and therefore the analysis for the latter is often more challenging. 

When the initial value is sufficiently smooth, 
it is well known that problem \eqref{SPDE1} has a unique mild solution satisfying the integral equation 
\be\label{mild-solution}
u(t)=e^{-tA}u^0+\int_0^t e^{-(t-s)A}f(u(s))\d s+\int_0^t e^{-(t-s)A}\d W(s) ,
\quad 
\en
in $L^2(\Omega;L^2(\O))$ for all $t\in(0,T]$, where $\{e^{-tA}:t\ge 0\}$ is the analytic semigroup generated by the operator $A$; see \cite{Kruse-2014}. Based on the expression in \eqref{mild-solution}, Jentzen \& Kloeden \cite{Jentzen-Kloeden-2009} proposed the exponential Euler method for semilinear stochastic problems, with the spectral Galerkin method in space. 
For an abstract semilinear stochastic equation in a Hilbert space $H$, 
under the assumptions 
\begin{align*}
&|f'(x)-f'(y)|
\le 
C|x-y|,
\quad 
|A^{-r}f'(x)A^rv| 
\le
C |v|, \\
&|A^{-1}f''(x)(v,w)|
\le
C|A^{-\frac12}v| |A^{-\frac12}w|,
\end{align*}
where $x, y, v, w\in H$ and $r\in\{0,\frac12,1\}$, 
Jentzen \& Kloeden \cite{Jentzen-Kloeden-2009} proved the strong convergence with an error bound of  
$O(\tau \ln(\frac{1}{\tau})+M^{-\frac12+\varepsilon})$ for one-dimensional space-time white noises, where $\varepsilon$ can be an arbitrary small number. These assumptions exclude nonlinear Nemytskii operators and therefore cannot be applied to the semilinear stochastic heat equations. 
%
%
For the stochastic heat equations, Wang \& Qi \cite{Wang-2015} proved that the exponential Euler method in \cite{Jentzen-Kloeden-2009} has an $L^2$-norm error bound of 
 $O(M^{-1}+\tau)$ and $O(M^{-\frac12+\varepsilon}+\tau^{\frac12-\varepsilon})$ for trace-class noises and one-dimensional space-time white noises, respectively.
%
Recently, Wang \cite{Wang-2020} proposed a nonlinearity-tamed exponential integrator for the stochastic Allen--Cahn equation with a locally Lipschitz nonlinear drift function and proved an $L^2$-norm error bound of $O(M^{-\frac12+\varepsilon}+\tau^{\frac12-\varepsilon})$. 
%

 
%
In addition to the exponential Euler method, many other numerical methods for problem \eqref{SPDE1} have also been studied in the literature, including the semi-implicit Euler method in time and the finite difference/element method in space; see \cite{Brehier-Cui-Hong-2018, Gyongy-1998, Gyongy-1999, Mukam-Tambue-2020,Mukam-Tambue-2020-2, Wang-2017, Yan-2005}. 
The temporal convergence orders proved in these articles are not greater than $\frac12-\varepsilon$ for additive one-dimensional space-time white noises and not greater than $\frac14$ for multiplicative one-dimensional space-time white noises. 
The sharp order $\frac14$ for multiplicative space-time white noises was proved for sufficiently smooth initial value $u^0\in C^3(\overline\O)$ in \cite{Gyongy-1998, Gyongy-1999}. Recently, Anton, Cohen \& Quer-Sardanyons proved the sharp convergence order $\frac14$ for multiplicative space-time white noises noises for initial data only in $H^1(\O)$; see \cite{Anton-Cohen-2018}. 

For all the methods mentioned above, the suboptimal-order error bound $O(M^{-\frac12+\varepsilon}+\tau^{\frac12-\varepsilon})$ for additive one-dimensional space-time white noises was proved for initial data at least in $H^1(\O)$, and the sharper error bound $O(M^{-\frac12}+\tau^{\frac12})$ has not been proved yet. 
The main reason is that the following conditions were often used to characterize the regularity of the noises: 
\begin{align}\label{condition-past}
\Big\| \int_0^te^{-(t-s)A}\d W(s) \Big\|_{L^2(\Omega;L^2(\mathcal{O}))} \le C t^{\frac{\alpha}{2}}
\quad\mbox{and}\quad
\Big\| \int_0^te^{-(t-s)A}\d W(s) \Big\|_{L^2(\Omega;\dot H^\alpha(\mathcal{O}))}\le C ,
\end{align}
which can be satisfied by the space-time white noise with $\alpha=\frac12-\varepsilon$, but cannot be satisfied with the sharp order $\alpha=\frac12$. 

%


Numerical approximations of related equations to \eqref{SPDE1} were also extensively studied. For example,
different kinds of
stochastic differential equations were considered in 
\cite{Higham-2002, Hutzenthaler-2011, Hutzenthaler-2012, A-Lang-2017, Cao-2018, Hutzenthaler-2020},
and the semilinear stochastic wave equations were discussed in 
 \cite{Cao-2007, Wang-Gan-Tang-2014, Anton-Cohen-Larsson-Wang-2016, Qi-Wang-2019, Kovas-Larsson-Saedpanah-2010, Banjai-Lord-Molla-2021, A-Lang-2021}, where convergence rates are higher than that for the semilinear stochastic heat equation
 due to the better regularity of the solution to the stochastic wave equations. 

In this article, we show that by modifying the exponential Euler method at the starting time level and use proper variable stepsizes locally refined towards $t=0$, the temporal and spatial convergence orders of the numerical solution can be improved to $\frac12$ for additive one-dimensional space-time white noises, while the regularity of the initial data can be relaxed to 
$H^{\beta}(\O)$ with $\beta>-1$.
This wider class of initial data includes discontinuous functions and measures in one dimension, such as the Dirac delta measure. In particular, the following error bound is proved: 
%
%
\begin{align}\label{Result-introduction}
\|U_M^n-u(t_n)\|_{L^2(\Omega;L^2(\mathcal{O}))}
\le
C(\tau^{\alpha}+M^{-\alpha}t_n^{-\frac{\alpha-\beta}{2}})
\quad\mbox{for}\,\,\, t_n\in(0,T] ,
\end{align}
for $u^0\in L^4(\Omega;H^{\beta}(\O))$ with some constant $\beta>-1$, 
where $\alpha$ characterizes the regularity of the noise, as described in Section \ref{section:assumptions}. 
The result in \eqref{Result-introduction}, which includes the sharp order $\alpha=\frac12$ for one-dimensional space-time white noises, is obtained by using a class of locally refined variable stepsizes to resolve the singularity of the solution at $t=0$, and by utilizing the real interpolation properties of a class of stochastic  Besov spaces $B^{q}L^p(\Om;\dot{H}^{s}(\O))$, $s\in\R$, $1\le p,q\le \infty$, defined in this article, replacing the condition \eqref{condition-past} by  
\begin{align}\label{condition-Besov}
\Big\| \int_0^te^{-(t-s)A}\d W(s) \Big\|_{L^2(\Omega;L^2(\mathcal{O}))} \le C t^{\frac{\alpha}{2}}
\quad\mbox{and}\quad
\Big\| \int_0^te^{-(t-s)A}\d W(s) \Big\|_{B^\infty L^2(\Omega;\dot H^\alpha(\mathcal{O}))}\le C,
\end{align}
which incorporates the space-time white noise with $\alpha=\frac12$.

The rest of this article is organized as follows. 
In Section \ref{assumption}, we first introduce the basic notations to be used in this article, as well as the definition and properties of the stochastic Besov spaces.
Then we present the numerical methods and the main theoretical results. 
%
The proof of the main theorem is presented in Section \ref{proof_of_main}.
%
Numerical results for several different initial data and noises are presented in Section \ref{numerical} to support the theoretical analysis.
Conclusions are presented in Section \ref{conclusions}.

\section{Main results}\label{assumption}

\subsection{Basic notations}\label{Basic_notations}
For {$s\in[0,2]$}, we denote by $A^{\frac{s}{2}}:D(A^{\frac{s}{2}})\rightarrow L^2(\O)$ the linear operator with domain 
$$
D(A^{\frac{s}{2}})= \Big\{v= \sum_{k=1}^\infty v_k\phi_k:  \|A^{\frac{s}{2}}v\|_{L^2(\O)}^2=\sum_{k=1}^\infty \lambda_k^{s}  v_k^2 <\infty \Big\} ,
$$
where
$$
A^{\frac{s}{2}}v := \sum_{k=1}^{\infty} \lambda_k^{\frac{s}{2}} v_k\phi_k .
$$
It is known that $D(A^{\frac{s}{2}})$ coincides with the following real interpolation space with equivalent norms:
$$
D(A^{\frac{s}{2}}) \cong \dot{H}^s(\O):= (L^2(\O),H^1_0(\O)\cap H^2(\O))_{\frac{s}{2},2} \quad\mbox{for}\,\,\, s\in(0,2) . 
$$
Therefore, we simply define the norm of $\dot{H}^s(\O)$ to be the same as $D(A^{\frac{s}{2}})$, i.e.,
$$
\|v\|_{\dot{H}^{s}(\O)} =\Big(\sum_{k=1}^{\infty}\lambda_k^{s}v_k^2\Big)^{\frac12} .
$$ 
The operator $A^{\frac{s}{2}}: \dot H^s(\O)\rightarrow L^2(\O)$ is obviously invertible, and its inverse is given by  
\be\label{A_minus_s}
A^{-\frac{s}{2}}v=\sum_{k=1}^{\infty}\lambda_k^{-\frac{s}{2}}v_k\phi_k . 
\en

The dual space of $\dot{H}^s(\O)$ is denoted by $\dot{H}^{-s}(\O)$. 
In particular, $v\in \dot{H}^{-s}(\O)$ if and only if $v=\sum_{k=1}^{\infty}v_k \phi_k$ with 
$$
\|v\|_{\dot{H}^{-s}(\O)}=\Big(\sum_{k=1}^{\infty}\lambda_k^{-s}v_k^2\Big)^{\frac12}< \infty.
$$ 

{
We denote by $\langle\cdot,\cdot\rangle_{\dot{H}^s,\dot{H}^{-s}}$ the pairing between $\dot{H}^{s}(\O)$ and $\dot{H}^{-s}(\O)$. Namely, for $g=\sum_{k=1}^\infty g_k\phi_k\in \dot{H}^s(\O)$ and $h=\sum_{k=1}^\infty h_k\phi_k\in \dot{H}^{-s}(\O)$, we have 
$$
\langle g, h\rangle_{\dot{H}^s(\O),\dot{H}^{-s}(\O)}
:=
\sum_{k=1}^{\infty}g_kh_k ,
$$
which is well-defined as the series $\sum_{k=1}^{\infty}g_kh_k $ is absolutely convergent, i.e.,  
$$
\sum_{k=1}^{\infty} |g_kh_k|
=\sum_{k=1}^{\infty}\lambda_k^{\frac{s}{2}}|g_k|\lambda_k^{-\frac{s}{2}}|h_k|
\le \Big(\sum_{k=1}^{\infty}\lambda_k^{s}g_k^2\Big)^{\frac12}
\Big(\sum_{k=1}^{\infty}\lambda_k^{-s}h_k^2\Big)^{\frac12}
= \|g\|_{\dot{H}^{s}(\O)}\|h\|_{\dot{H}^{-s}(\O)}.
$$
In the case $g\in \dot{H}^s(\O)\subset L^2(\O)$ and $h\in L^2(\O)\subset  \dot{H}^{-s}(\O)$, we have 
$$\langle g, h\rangle_{\dot{H}^s(\O),\dot{H}^{-s}(\O)} = (g,h) , $$
where $(\cdot,\cdot)$ denotes the inner product of $L^2(\Omega)$. Therefore, $\langle \cdot, \cdot\rangle_{\dot{H}^s(\O),\dot{H}^{-s}(\O)}$ is an extension of the $L^2(\O)$ inner product $(\cdot,\cdot)$. For this reason and the simplicity of notation, we simply use $(\cdot,\cdot)$ to denote the pairing between $\dot{H}^s(\O)$ and $\dot{H}^{-s}(\O)$ in this article. 
}

The operator $A^{-\frac{s}{2}}: \dot H^{-s}(\O)\rightarrow L^2(\O)$ is well defined {by \eqref{A_minus_s}} and coincident with the adjoint operator of $A^{-\frac{s}{2}}: L^2(\O)\rightarrow \dot H^s(\O)$, i.e.,
\begin{align*}
\hspace{50pt}
(u,A^{-\frac{s}{2}}w) 
=&
\Big(\sum_{i=1}^{\infty}u_i\phi_i,\sum_{j=1}^{\infty}\lambda_i^{-\frac{s}{2}}w_j\phi_j\Big)
\\
=&
\sum_{i=1}^{\infty}\lambda_i^{-\frac{s}{2}}u_iw_i 
=
(A^{-\frac{s}{2}}u,w)
\qquad\forall\, u\in L^2(\O) \,\,\mbox{and}\,\, w\in \dot{H}^{-s}(\O),
\end{align*}
{where the right-hand side denotes the pairing between $\dot{H}^s(\O)$ and $\dot{H}^{-s}(\O)$.} 


For a random variable $v$ that takes values in a Banach space $X$ and is measurable from $(\Omega,\mathcal{F})$ to $(X,\mathcal{B}(X))$,
 we define the following norm: 
\begin{align*}
\|v\|_{L^p(\Om;X)}
:=\Big(\int_{\Om}\|v\|_X^{p} \, P(\d\omega)\Big)^{\frac{1}{p}}
\qquad\forall\, 1\le p \le \infty. 
\end{align*} 
If we denote by $\|\cdot\|_{L^2(\O)\to L^2(\O)}$ the operator norm on $L^2(\O)$, then the following estimate holds (see \cite[Appendix B.2]{Kruse-2014}): 
\begin{align*}
&\|A^{s}e^{-tA}\|_{L^2(\O)\to L^2(\O)}
\le Ct^{-s} 
&&\forall\,  t>0 ,\,\,\, \forall\, s\ge 0 ,\\
&\|A^{-s}(e^{-tA}-I)\|_{L^2(\O)\to L^2(\O)}
\le Ct^{s} 
&&\forall\,  t\ge 0 ,\,\,\, \forall\, s\in[0,1]  
\end{align*}

Let $\mathcal{L}_2$ be the space of Hilbert Schmidt operators $\Phi$ from $L^2(\O)$ to $L^2(\O)$, with the following norm: 
$$
\|\Phi\|_{\mathcal{L}_2}:=\Big(\sum_{k=1}^\infty \|\Phi\phi_k\|_{L^2(\O)}^2\Big)^{\frac12},
$$
Let $\mathcal{L}_2^0$ be the space of Hilbert Schmidt operators $\Psi$ from $Q^{\frac12}L^2(\O)$ to $L^2(\O)$, with the following norm: 
$$
\|\Psi\|_{\mathcal{L}_2^0}:=\Big(\sum_k \mu_k \|\Psi\phi_k\|_{L^2(\O)}^2\Big)^{\frac12},
$$
{where we have adopted the notation $\mathcal{L}_2^0$  and $Q^{\frac12}L^2(\O)$ in \cite[pages 54 and 96]{Da-Prato1992}.} 
Let $\mathcal{N}_W^2(0,T;L^2(\O))$ be the space of predictable processes $\Phi:[0,T]\times \Om\to \mathcal{L}_2^0$ satisfying
$$
\int_0^T\|\Phi(s)\|_{L^2(\Om;\mathcal{L}_2^0)}^2\d s<\infty . 
$$
{ For $0\le t\le T$, it} is known that the following It\^o's isometry holds (cf. \cite{Catherine-2014,Rockner-2007}):
\begin{align}\label{ito-isometry}
\Big\|\int_0^t\Phi(s)\d W(s)\Big\|_{L^2(\Om;L^2(\O))}^2
=
\int_0^t\|\Phi(s)\|_{L^2(\Om;\mathcal{L}_2^0)}^2\d s
\quad\forall\, \Phi\in \mathcal{N}_W^2(0,T;L^2(\O)),
\end{align}
{ and the following Burkholder–Davis–Gundy-type inequality holds (cf. \cite{Da-Prato1992,Kruse-2014})
\begin{align}\label{BDG}
\Big\|\int_0^t\Phi(s)\d W(s)\Big\|_{L^p(\Om;L^2(\O))}
\le C_p 
\Big(\mathbb{E}\int_0^t \|\Phi(s)\|_{\mathcal{L}_2^0}^2\d s\Big)^{\frac12}\quad\forall\, \Phi\in \mathcal{N}_W^2(0,T;L^2(\O)),
\end{align}
where $C_p$ is a constant dependent of $p$. }

Throughout this article, we denote by $C$ a generic positive constant which may be different at different occurrences but always independent of $\tau$ (time stepsize), $n$ (time level) and $M$ (degrees of freedom in each spatial direction). 
We denote by ``$A\sim B$' the statement ``$C^{-1}B\le A\le CB$ for some constant $C$''.  

\subsection{Stochastic Besov Spaces}\label{Stochastic_Besov_Spaces}
Let $1\le p,q\le \infty$ and $s \in[-2,2]$. Since any function $v\in L^p(\Omega; \dot{H}^{-2}(\O))$ can be decomposed into 
$$v=\sum_{k=1}^{\infty}v_k\phi_k \quad\mbox{with}\quad v_k= (v,\phi_k) \in L^p(\Omega) , $$ 
we can define a projection operator $\Pi_j: L^p(\Omega;\dot{H}^{-2}(\O)) \rightarrow L^p(\Omega;\dot H^2(\O)) $ by 
$$\Pi_jv=\sum_{k=2^{j-1}}^{2^j-1}v_k\phi_k . $$ 
Then the stochastic Besov space $B^qL^p(\Omega;\dot{H}^{s}(\mathcal{O}))$ is defined as the space of functions $v\in L^p(\Omega; \dot{H}^{-2}(\O))$ such that  
$$\|v\|_{B^qL^p(\Omega;\dot{H}^{s}(\mathcal{O}))}<\infty , $$
with
\begin{align*}
\|v\|_{B^qL^p(\Omega;\dot{H}^{s}(\mathcal{O}))}
:=
\left\{
\begin{aligned}
&\Big(\sum\limits_{j=1}^{\infty} \|\Pi_jv\|_{L^p(\Omega;\dot{H}^{s}(\mathcal{O})}^q\Big)^{\frac1q}
&& \mbox{if}\; q\in [1,\infty),\\
&\max\limits_{j\in \mathbb{N}^+}\|\Pi_jv\|_{L^p(\Omega;\dot{H}^{s}(\mathcal{O}))}&& \mbox{if}\; q=\infty.
\end{aligned}
\right.
\end{align*}

\subsection{Assumptions on the nonlinearity and noise}
\label{section:assumptions}

We consider the semilinear stochastic heat equation \eqref{SPDE1} with additive noises under the following assumptions. 
\begin{assumption}\label{thm-assumption}
\begin{enumerate}
\item[(1)]
The function $f:\mathbb{R}\to\mathbb{R}$ is globally Lipschitz continuous,
i.e.,
\begin{align*}
|f(x)-f(y)|\le C|x-y|, \quad\forall\, x,y\in\mathbb{R}.
\end{align*} 
\item[(2)]
There exists $\eta\in(0,2)$ such that 
$$ 
\| (-A)^{-\frac{\eta}{2}}[{ f''(u)vw}] \|_{L^2(\O)} \le C \|v\|_{L^2(\O)} \|w\|_{L^2(\O)} \quad\forall\, u,v,w\in L^2(\O). 
$$
\end{enumerate}
\begin{enumerate}
\item[(3)]
There exists an $\alpha\in (0,1]$ such that for $t\in [0,T]$
\begin{align}
&\Big\| \int_0^te^{-(t-s)A}\d W(s) \Big\|_{L^2(\Omega;L^2(\mathcal{O}))} \le C t^{\frac{\alpha}{2}}, \label{t-alpha}\\
&\Big\| \int_0^te^{-(t-s)A}\d W(s) \Big\|_{B^\infty L^2(\Omega;\dot H^\alpha(\mathcal{O}))} 
{\le C}.\label{mu-C}
\end{align}
{ In the case $\alpha=1$, we additionally assume that the noise is trace class.}
\item[(4)] 
The initial value is $\mathcal{F}_0$ measurable and satisfies that 
{$u^0\in L^4(\Omega;\dot{H}^{\beta}(\O))$  for some constant $\beta\in(-1,\alpha]$}. 
\end{enumerate}
\end{assumption}

\begin{remark}\label{noise-remark}
\upshape 
{
Assumption \ref{thm-assumption} (2) holds for $d\in\{1,2,3\}$ if $f:\R\rightarrow\R$ is a function with bounded derivatives up to second order; see \cite[Example 3.2]{Wang-2015-2}. 
In one dimension, 
a large number of measure-valued functions 
satisfy 
Assumption \ref{thm-assumption} (4). Actually, each measure $\mu$ 
(including the Dirac delta function,
i.e., the Dirac measure) 
corresponds to a linearly bounded  functional $\Lambda(f):=\int_{\O}f\d \mu$ on the continuous function space $C(\O)$; see
\cite[page 61]{Malliavin-1995} and
\cite[page 32
]{Stein-functional}. And it is well-known that $\dot{H}^{\frac12+\varepsilon}(\O)\hookrightarrow C(\O)$; see \cite[page 86, Theorem 3.26
]{Mclean-2000}. 
Therefore $\mu$ can regarded as a linearly bounded functional on $\dot{H}^{\frac12+\varepsilon}(\O)$, i.e.,  
 $\mu\in \dot{H}^{-\frac12-\varepsilon}(\O)$.}
Assumption \ref{thm-assumption} (3) naturally holds for trace-class noises with $\alpha=1$ and one-dimensional space-time white noises with $\alpha=1/2$, as shown below.

%
%
%

%
%

\begin{enumerate}
\item 
If the operator $Q$ is of trace class, i.e., ${\rm tr}(Q)=\sum\limits_{k=1}^\infty\mu_k < \infty$, then 
Assumption \ref{thm-assumption} (3) 
holds with $\alpha=1$. 
In fact, 
according to \eqref{ito-isometry} 
{ the following relation holds}
\begin{align}\label{Noise-Expansion}
\Big\| \int_0^te^{-(t-s)A}\d W(s) \Big\|^2_{ L^2(\Omega;L^2(\mathcal{O}))}
&{ =}
\int_0^t \|e^{-(t-s)A}\|_{\mathcal{L}_2^0}^2\d s\notag  \\
&{ =}
\sum_{k=1}^\infty \int_0^t e^{-2(t-s)\lambda_k} \mu_k \d s \notag \\
&{ =}
\sum_{k=1}^\infty \mu_k \frac{1 - e^{-2t\lambda_k}}{2\lambda_k} 
\le Ct \sum_{k=1}^\infty \mu_k . 
\end{align}

The equivalence relation in {\eqref{mu-C}} can be shown similarly as \eqref{Noise-Expansion}, i.e., 
\begin{align}\label{classical-alpha=1}
\Big\|\int_0^{t}e^{-(t-s)A}\d W(s)\Big\|_{ L^2(\Omega;\dot H^1(\mathcal{O}))}
=&\ 
\Big\|\int_0^{t}A^{\frac{1}{2}}e^{-(t-s)A}\d W(s)\Big\|^2_{L^2(\Omega;L^2(\mathcal{O}))} \notag\\
=
&\
\int_0^t \|A^{\frac{1}{2}}e^{-(t-s)A}\|_{\mathcal{L}_2^0}^2\d s
\notag\\
= 
&\
\sum\limits_{k=1}^\infty \frac{\mu_k}{2} (1 - e^{-2t\lambda_k})\notag\\
\le
&\
\sum\limits_{k=1}^\infty \mu_k , 
\end{align}
{ which implies that 
\begin{align*}
\Big\|\Pi_j\int_0^{t}e^{-(t-s)A}\d W(s)\Big\|_{ L^2(\Omega;\dot H^1(\mathcal{O}))}
\le
\Big\|\int_0^{t}e^{-(t-s)A}\d W(s)\Big\|_{ L^2(\Omega;\dot H^1(\mathcal{O}))}
\le 
&\
\sum\limits_{k=1}^\infty \mu_k . 
\end{align*}
This proves \eqref{t-alpha}--\eqref{mu-C} in the case $\alpha=1$.}

\item
If $d=1$ and $\mu_k= 1$ (for a space-time white noise), then 
Assumption \ref{thm-assumption} (3) 
 holds with $\alpha=\frac12$. In fact, Weyl's law (see Evans \cite[Page 358]{Evans}) says that the eigenvalues of the Laplacian operator have the following asymptotic behaviour: 
\begin{align}\label{weyl}
\lim_{j\to \infty}\frac{\lambda_j^{\frac{d}{2}}}{j}=\frac{(2\pi)^d}{|\mathcal{O}|\alpha(d)},
\end{align} 
where $|\O|$ denotes the volume of $\O$ and $\alpha(d)$ denotes the volume of the unit ball in $\mathbb{R}^d$. 
Therefore, $\lambda_k=O(k^{-2})$ in one dimension, {and} 
\begin{align}
\Big\| \int_0^te^{-(t-s)A}\d W(s) \Big\|^2_{L^2(\Omega;L^2(\mathcal{O}))}
&\sim C (1-e^{-2t}) + C \sum_{k=2}^{\infty}\frac{1-e^{-2tk^2}}{k^{2}}\notag\\
&\le Ct^{\frac{1}{2}} + 
C\int_1^{\infty}\frac{1-e^{-2ts^2}}{s^{2}}ds\notag\\
&= Ct^{\frac{1}{2}} +  
C\int_1^{\infty}\frac{1-e^{-2tz}}{z^{\frac{3}{2}}}dz\notag\\
&\le
Ct^{\frac{1}{2}}+Ct^{\frac{1}{2}}\Gamma\Big(\frac{1}{2}\Big) 
\le
Ct^{\frac{1}{2}} . \label{little-delight}
\end{align}
%
%
%
This proves \eqref{t-alpha} with $\alpha=\frac12$ for a space-time white noise. 

For $\alpha=\frac12$ we also see that 
\begin{align*}
\max_{j\in \mathbb{N}^+} \Big(\sum_{k=2^{j-1}}^{2^j-1} \mu_k \lambda_k^{\alpha-1} (1 - e^{-2t\lambda_k}) \Big)^{\frac{1}{2}}
\le 
\max_{j\in \mathbb{N}^+}\Big(C\sum_{k=2^{j-1}}^{2^j-1}{2^{2(j-1)(\frac12-1)}}\Big)^{\frac12}
\le
 C, 
\end{align*}
where the last inequality holds because there are only $2^{j-1}$ terms in the summation. 
This proves \eqref{mu-C}. 
\end{enumerate}
\end{remark}

Under 
Assumption \ref{thm-assumption} 
the existence, uniqueness and regularity of mild solutions to problem \eqref{SPDE1} are summarized below. The {proof} of these results is presented in Appendix. 
\begin{proposition}\label{Regularity} 
Under 
Assumption \ref{thm-assumption}, 
 problem \eqref{SPDE1} has a unique mild solution $u$ in the space 
$$
X=\Big\{v\in L^1\big(0,T; L^2(\Om;L^2(\O))\big): \sup\limits_{t\in (0,T]} (1+t^{\frac{\beta}{2}})^{-1} \|v(t)\|_{L^2(\Om;L^2(\O))}<\infty \Big \}. 
$$
%
For {$2\le p\le 4$} 
the mild solution has the following qualitative regularity: 
$$u\in C\big([0,T]; L^p(\Om;\dot{H}^{\min\{\beta,0\}}(\O))\big) \cap C\big([\varepsilon,T]; L^p(\Om;L^2(\O))\big)$$
which holds for arbitrary $\varepsilon\in(0,T)$. Moreover, the following quantitative estimates hold:
\begin{align}
\label{u-L2-estimate}
\|u(t)\|_{L^p(\Omega;L^2(\mathcal{O}))}
&\le
C(1+t^{\frac{\beta}{2}}) \hspace{5pt} \mbox{for}\,\,\, t\in(0,T] , 
\\
\label{B-Inf-estimate}
\|u(t)\|_{B^{\infty}L^p(\Om;\dot{H}^{\alpha}(\O))}
&\le
Ct^{-\frac{\alpha-\beta}{2}} \hspace{15pt} \mbox{for}\,\,\, t\in(0,T] . 
\end{align}
\end{proposition}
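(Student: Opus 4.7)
The plan is to apply a Banach fixed-point argument to the map
$$\Phi(v)(t)=e^{-tA}u^0+\int_0^t e^{-(t-s)A}f(v(s))\,\d s+\int_0^t e^{-(t-s)A}\,\d W(s)$$
on the space $X$ equipped with the weighted sup norm $\|v\|_X:=\sup_{t\in(0,T]}(1+t^{\beta/2})^{-1}\|v(t)\|_{L^2(\Om;L^2(\O))}$, and then to read off the two quantitative estimates \eqref{u-L2-estimate}--\eqref{B-Inf-estimate} directly from the corresponding bounds on the three terms of the mild-solution formula \eqref{mild-solution}.

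For the initial term, the smoothing bound $\|A^{\gamma}e^{-tA}\|_{L^2(\O)\to L^2(\O)}\le Ct^{-\gamma}$ applied with $\gamma=-\beta/2$ when $\beta<0$ yields $\|e^{-tA}u^0\|_{L^p(\Om;L^2(\O))}\le Ct^{\beta/2}\|u^0\|_{L^p(\Om;\dot H^\beta)}$, and hence an $L^p(\Om;L^2)$ bound of the form $C(1+t^{\beta/2})$. At the $\dot H^\alpha$ level, the projection $\Pi_j$ restricts to frequencies $\lambda_k\in[\lambda_{2^{j-1}},\lambda_{2^j}]$, and the pointwise inequality $\lambda^{\alpha-\beta}e^{-2t\lambda}\le Ct^{-(\alpha-\beta)}$ then gives $\|\Pi_je^{-tA}u^0\|_{L^p(\Om;\dot H^\alpha)}\le Ct^{-(\alpha-\beta)/2}\|u^0\|_{L^p(\Om;\dot H^\beta)}$ uniformly in $j$. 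The stochastic convolution is bounded in $L^2(\Om;L^2)$ by \eqref{t-alpha} and in $B^\infty L^2(\Om;\dot H^\alpha)$ by \eqref{mu-C}, and both bounds extend to $p=4$ because the dyadic blocks $\Pi_j\int_0^t e^{-(t-s)A}\,\d W(s)$ are Gaussian, so their $L^4(\Om)$ and $L^2(\Om)$ norms are equivalent (alternatively via \eqref{BDG}). For the drift term, the Lipschitz property of $f$ gives $\|f(v(s))\|_{L^2}\le C(1+\|v(s)\|_{L^2})$; using $\|A^{\alpha/2}e^{-(t-s)A}\|_{L^2(\O)\to L^2(\O)}\le C(t-s)^{-\alpha/2}$ together with $\|\Pi_j\cdot\|_{\dot H^\alpha}\le\|\cdot\|_{\dot H^\alpha}$, the resulting Beta-type integrals $\int_0^t(t-s)^{-\alpha/2}(1+s^{\beta/2})\,\d s$ and $\int_0^t(1+s^{\beta/2})\,\d s$ are controlled by $Ct^{-(\alpha-\beta)/2}$ and $C(1+t^{\beta/2})$ respectively on $(0,T]$, provided $\beta>-1$ so that the integrals converge.

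Combining these three bounds shows that $\Phi$ maps $X$ into itself. Since the stochastic and initial terms do not depend on $v$, the difference of two iterates is purely $\int_0^t e^{-(t-s)A}(f(v_1)-f(v_2))\,\d s$, and the Lipschitz property yields
$$\|\Phi(v_1)(t)-\Phi(v_2)(t)\|_{L^2(\Om;L^2)}\le C\int_0^t\|v_1(s)-v_2(s)\|_{L^2(\Om;L^2)}\,\d s,$$
from which a standard Gronwall argument gives uniqueness in $X$ and convergence of the Picard iterates to the unique fixed point. The continuity statements in $C([0,T];L^p(\Om;\dot H^{\min\{\beta,0\}}))$ and $C([\varepsilon,T];L^p(\Om;L^2))$ then follow from the strong continuity of $\{e^{-tA}\}$ on the respective spaces and dominated convergence applied to the already-controlled integrands.

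The main obstacle is the Besov estimate \eqref{B-Inf-estimate}: one has to produce the singular rate $t^{-(\alpha-\beta)/2}$ uniformly over all dyadic blocks $j$, and simultaneously verify that the drift contribution---naively of order $t^{1-\alpha/2}+t^{1+\beta/2-\alpha/2}$---is dominated by $t^{-(\alpha-\beta)/2}$ on $(0,T]$, which reduces to the elementary inequalities guaranteed by $\beta\le\alpha\le 1$ together with the finiteness of $T$. Lifting the $L^2(\Om)$ bounds to $L^4(\Om)$ uniformly in the dyadic block $j$ is also where one uses Gaussianity (or \eqref{BDG}) blockwise, rather than on the full stochastic convolution.
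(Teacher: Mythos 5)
Your proposal is correct and follows essentially the same route as the paper's Appendix: a Banach fixed-point argument on the weighted space $X$ (the paper makes the map a one-shot contraction via an exponential weight $e^{-\lambda t}$ where you use Picard iteration plus Gronwall, which is an equivalent standard device), followed by term-by-term estimates of the three pieces of \eqref{mild-solution} — semigroup smoothing for the initial term, Lipschitz bounds and Beta-type integrals for the drift, and \eqref{t-alpha}--\eqref{mu-C} together with \eqref{BDG}/Gaussianity of the dyadic blocks for the noise — to obtain \eqref{u-L2-estimate} and \eqref{B-Inf-estimate}. The only substantive difference is that the paper establishes a quantitative H\"older modulus for $u$ on $[\varepsilon,T]$ via the explicit computation \eqref{e-m-e-e}, whereas you obtain the (weaker, but sufficient) qualitative continuity by dominated convergence through the It\^o isometry, which is adequate for the statement as claimed.
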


\begin{remark}\label{RemarkProP}
{ 
For the trace-class noise, since \eqref{classical-alpha=1} holds with the classical Sobolev space, it follows that estimate \eqref{B-Inf-estimate} could be replaced by the following stronger result (with the classical Sobolev space for $\alpha=1$):
\begin{align*}
\|u(t)\|_{L^p(\Om;\dot{H}^{1}(\O))}
&\le
Ct^{-\frac{1-\beta}{2}} \hspace{15pt} \mbox{for}\,\,\, t\in(0,T] . 
\end{align*}
}
\end{remark}

\subsection{The numerical method}\label{space-semi}


Let $0=t_0<t_1<\cdots<t_N=T$ be a partition of the time interval $[0,T]$ with stepsizes $\tau_n=t_n-t_{n-1}\sim t_n^{\gamma} \tau$ {for $n=1,2,\cdots,N$}. 
The variable stepsizes defined in this way {have} the following properties: 
\begin{enumerate}
\item
$\tau_n\sim \tau_{n-1}$ for two consecutive stepsizes. 

\item 
$\tau_1=\tau^{\frac{1}{1-\gamma}}$. Hence, the starting stepsize is much smaller than the maximal stepsize. This can resolve the solution's singularity at $t=0$. 

\item
The total number of time levels is $O(T/\tau)$. Hence, the total computational cost is equivalent to using a uniform stepsize $\tau$. 
\end{enumerate}
This type of stepsizes was shown to be able to resolve the singularity at $t=0$ for semilinear parabolic equations with nonsmooth initial data; see \cite{Li-Ma-2020}. 

By using the variable stepsizes shown above, we consider the following modified exponential Euler method in time: 
%
\begin{align}\label{spde-exp-1}
\begin{aligned}
u^1
=&
e^{-\tau_1A}u^0,\\
u^n
=&e^{-\tau_n A}u^{n-1}+\int_{t_{n-1}}^{t_n} e^{-(t_n-s)A}f(u^{n-1})\d s
+ \int_{t_{n-1}}^{t_n} e^{-(t_n-s)A} \d W(s)  \quad \mbox{for}\,\,\, n\ge 2,
\end{aligned}  
\end{align}
where we assume that $\int_{t_{n-1}}^{t_n} e^{-(t_n-s)A} \d W(s)$ can be computed sufficiently accurately. 
This can be done by a spectral method in space with sufficiently many terms, as shown below. 
The semidiscrete scheme in \eqref{spde-exp-1} differs from the {\it exponential Euler method} only at the first  time level, where we drop the nonlinear term and the noise term. This is because that the nonlinear term $f(u^0)$ is generally not well defined for a nonsmooth initial value $u^0\in \dot H^{\beta}(\O)$.

Let $P_M$ be the $L^2$-orthogonal projection onto $S_M = \mbox{span}\{\phi_1,\dots,\phi_{M^d}\}$, defined by 
\begin{align*}
P_Mf=\sum_{k=1}^{M^d} f_k \phi_k \quad \mbox{for}\,\,\,\, f=\sum_{k=1}^\infty f_k \phi_k\in L^2(\O) .
\end{align*}
On a general bounded domain $\O$, we consider the following fully discrete spectral Galerkin method: 
\begin{subequations}\label{FD-method-1}
\begin{align}
U_M^1
&=
e^{-\tau_1A}U_M^0\quad\quad \mbox{with}\;\;U_M^0 = P_M u^0, \label{fullyUM1}
\\
U_M^n 
&=
e^{-\tau_n A}U_M^{n-1}+\int_{t_{n-1}}^{t_n}e^{-(t_n-s)A}P_Mf(U_M^{n-1})ds+\int_{t_{n-1}}^{t_n}e^{-(t_n-s)A}P_MdW(s) 
\quad\mbox{for}\,\,\, n\ge 2 . \label{fullyUMn}
\end{align}
\end{subequations}

In the one-dimensional case, e.g., $\O=(0,1)$, we can change to consider the following fast method which utilizes trigonometric interpolation and fast Fourier transform (FFT). 
Let $P_M$ and $I_M$ be the $L^2$-orthogonal projection and trigonometric interpolation operator (defined below) onto the finite dimensional space 
$$
S_M= \Big\{ \sum_{j=1}^M f_{j} \sin(j\pi x) :  
f_{j}\in \R  \Big\} .
$$
Then we can consider the Fourier sine collocation method: 
\begin{subequations}\label{FD-method-2}
\begin{align}
U_M^1
&=
e^{-\tau_1A}U_M^0\quad\quad \mbox{with}\;\;U_M^0 = P_M u^0,
\\
U_M^n 
&=
e^{-\tau_n A}U_M^{n-1}+\int_{t_{n-1}}^{t_n}e^{-(t_n-s)A}I_Mf(U_M^{n-1})ds+\int_{t_{n-1}}^{t_n}e^{-(t_n-s)A}P_MdW(s) 
\quad\mbox{for}\,\,\, n\ge 2 , 
\label{FD-method-22}
\end{align}
\end{subequations} 
which only requires computing the trigonometric interpolation of the nonlinear function { $I_Mf(U_M^{n-1}):=I_M^*[f(U_M^{n-1})-f(0)]+f(0)P_M1$ instead of the $L^2$ projection $P_Mf(U_M^{n-1})$, where $I_M^*$ denotes the standard trigonometric interpolation operator onto $S_M$. The definition of $I_M$ guarantees that the standard trigonometric sine interpolation operator $I_M^*$ only acts on a function in $\dot H^1$ (satisfying the zero boundary condition) and therefore has optimal-order convergence; see the discussion below \eqref{v-IMv}.} The evaluation of the trigonometric interpolation $I_Mf(U_M^{n-1})$ could be done with $O(M\log M)$ operations at every time level by using FFT. This fast algorithm using trigonometric interpolation and FFT can also be extended to $d$-dimensional rectangular domains under the homogeneous Dirichlet boundary condition.
\begin{theorem}\label{main-theorem-0}
Let $\O$ be a bounded domain in $\R^d$ with $d\ge 1$.
Under 
Assumption \ref{thm-assumption}, 
by choosing $\gamma$ satisfying the following condition ({$\gamma$ is the constant from the relation $\tau_n\sim t_n^{\gamma} \tau$}):
\be\label{gamma-condition0} 
\max\Big\{\frac12,1-\frac{1+\beta}{\alpha}\Big\}<\gamma<1,
\en
the numerical solution given by the fully discrete spectral Galerkin method \eqref{FD-method-1} has the following error bound: 
\begin{align*}
\|U_M^n-u(t_n)\|_{L^2(\Omega;L^2(\mathcal{O}))}
\le
C\tau^{\alpha} + CM^{-\alpha}t_n^{-\frac{\alpha-\beta}{2}} .
\end{align*}
\end{theorem}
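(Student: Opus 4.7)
The plan is to establish a one-step error recursion in $L^2(\Omega;L^2(\mathcal{O}))$ by subtracting \eqref{FD-method-1} from the mild formulation \eqref{mild-solution} sampled at $t_n$, and then close the analysis via a discrete Gronwall argument. Setting $e_n:=U_M^n-u(t_n)$, for $n\ge 2$ the recursion is
\begin{align*}
e_n = e^{-\tau_n A} e_{n-1}
+ \int_{t_{n-1}}^{t_n}\! e^{-(t_n-s)A}\bigl[P_M f(U_M^{n-1}) - f(u(s))\bigr] ds
+ \int_{t_{n-1}}^{t_n}\! e^{-(t_n-s)A}(P_M - I)\,dW(s),
\end{align*}
while the starting residual $e_1$ additionally absorbs $-\int_0^{t_1} e^{-(t_1-s)A} f(u(s))\,ds$ and $-\int_0^{t_1} e^{-(t_1-s)A}\,dW(s)$ because the modified scheme drops both integrals at the first step. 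The nonlinear bracket is split into a Lipschitz part $P_M[f(U_M^{n-1})-f(u(t_{n-1}))]$ (absorbed by Gronwall with factor $1+C\tau_n$), a temporal part $P_M[f(u(t_{n-1}))-f(u(s))]$, and a spatial part $(I-P_M)f(u(s))$.

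For the spatial terms I would collect residuals globally. The one-step noise errors telescope into the single object $\int_0^{t_n} e^{-(t_n-s)A}(I-P_M)\,dW(s)$, whose $L^2(\Omega;L^2)$ norm is bounded by $CM^{-\alpha}$ via the dyadic inequality
\begin{align*}
\|(I-P_M)v\|_{L^2(\Omega;L^2(\mathcal{O}))}\le CM^{-\alpha}\|v\|_{B^\infty L^2(\Omega;\dot H^\alpha(\mathcal{O}))},
\end{align*}
which follows from Weyl's law and the geometric series $\sum_{j>j_M}2^{-2j\alpha/d}\sim M^{-2\alpha}$, combined with the Besov hypothesis \eqref{mu-C}. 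Applied to $u(t_n)$ the same inequality together with \eqref{B-Inf-estimate} supplies the $M^{-\alpha}t_n^{-(\alpha-\beta)/2}$ contribution, and the spatial nonlinear residual $(I-P_M)f(u(s))$ is treated via the chain rule, Assumption \ref{thm-assumption}(2), and the regularity of $u(s)$. This is where the new Besov hypothesis pays off, since the classical Sobolev norm would diverge for the one-dimensional space-time white noise at the sharp exponent $\alpha=1/2$.

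The temporal part requires a delicate cancellation, because naive norm summation $\int_{t_{n-1}}^{t_n}\|u(s)-u(t_{n-1})\|\,ds\lesssim \tau_n^{1+\alpha/2}t_{n-1}^{-(\alpha-\beta)/2}$ yields only the suboptimal rate $O(\tau^{\alpha/2})$. To close the gap I would Taylor-expand $f(u(s))-f(u(t_{n-1}))=f'(u(t_{n-1}))(u(s)-u(t_{n-1}))+\mathcal R$, with the quadratic remainder $\mathcal R$ controlled by Assumption \ref{thm-assumption}(2), and substitute the mild formula for $u(s)-u(t_{n-1})$. The dominant piece, driven by $\int_{t_{n-1}}^s e^{-(s-r)A}\,dW(r)$, produces an iterated stochastic integral. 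Stochastic Fubini then collects the sum over $n$ into a single It\^o integral on $[0,t_n]$, and the It\^o isometry \eqref{ito-isometry} combined with a Besov-variant smoothing bound $\|(e^{-\theta A}-I)v\|_{L^2}\lesssim\theta^{\alpha/2}\|v\|_{B^\infty L^2(\Omega;\dot H^\alpha(\mathcal{O}))}$ (proved by a dyadic split at $\lambda\sim 1/\theta$) upgrades the rate to the full $\tau^\alpha$, because the variance is controlled by $\int\|\cdot\|_{\mathcal L_2^0}^2$ rather than by $(\sum\|\cdot\|)^2$.

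At $n=1$ the residual $\|e_1\|_{L^2(\Omega;L^2)}$ is dominated by $Ct_1^{\alpha/2}$ from the dropped stochastic integral and $C(t_1+t_1^{1+\beta/2})$ from the dropped drift, with the initial projection contribution $\|(P_M-I)e^{-\tau_1 A}u^0\|\lesssim M^{-\alpha}t_1^{-(\alpha-\beta)/2}$ handled through semigroup smoothing when $\beta<0$. Since $t_1\sim\tau^{1/(1-\gamma)}$, forcing $t_1^{\alpha/2}\le\tau^\alpha$ gives $\gamma\ge 1/2$; applying the Riemann-sum comparison $\sum_k\tau_k g(t_k)\approx\int_0^T g(t)\,dt$ to the interior nonlinear residuals forces $\gamma>1-(1+\beta)/\alpha$, yielding \eqref{gamma-condition0}. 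The principal obstacle is the It\^o--Fubini analysis sketched above, which is what makes the sharp rate attainable at $\alpha=1/2$; secondary difficulties are the projection error at $n=1$ for nonsmooth $u^0\in\dot H^\beta$ with $\beta<0$, and the transfer of Besov regularity through the nonlinearity.
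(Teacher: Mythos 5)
Your overall framework (one-step recursion, splitting into Lipschitz/temporal/spatial/noise residuals, discrete Gronwall, and the stepsize constraints from $t_1\sim\tau^{1/(1-\gamma)}$) matches the paper's, and two of your key steps are sound: the telescoped noise error $\int e^{-(t_n-s)A}(I-P_M)\,dW(s)$ is indeed $O(M^{-\alpha})$ via the dyadic bound $\|(I-P_M)v\|\le CM^{-\alpha}\|v\|_{B^\infty L^2(\Omega;\dot H^\alpha)}$ together with \eqref{mu-C} (this is in fact more direct than the paper's two-case interpolation argument for $\mathcal{E}_5^n$), and the exploitation of martingale orthogonality for the iterated stochastic integral is exactly how the paper treats $\mathcal{E}^n_{*,2}$, yielding $\tau^{(1+\alpha)/2}\ge\tau^\alpha$.

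The genuine gap is in the temporal consistency term. When you substitute the mild formula for $u(s)-u(t_{i-1})$ into $f'(u(t_{i-1}))\bigl(u(s)-u(t_{i-1})\bigr)$, the increment contains not only the stochastic convolution $\int_{t_{i-1}}^s e^{-(s-r)A}\,dW(r)$ but also the deterministic piece $\bigl(e^{-(s-t_{i-1})A}-I\bigr)u(t_{i-1})$, which your proposal never addresses. This piece has the same size $\theta^{\alpha/2}t_{i-1}^{-(\alpha-\beta)/2}$ as the stochastic one, but it is a systematic drift with no mean-zero structure, so neither stochastic Fubini nor the It\^o isometry applies to it; a direct norm summation gives $\sum_i\tau_i^{1+\alpha/2}t_{i-1}^{-(\alpha-\beta)/2}\sim\tau^{\alpha/2}$, i.e., only half the claimed rate. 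The paper's resolution (its term $\mathcal{E}^n_{4,1}$) is to keep the full nonlinear difference $f(e^{-\theta A}u(t_{i-1}))-f(u(t_{i-1}))=\int_0^\theta f'(e^{-\delta A}u)\Delta e^{-\delta A}u\,d\delta$ and rewrite the integrand in divergence form, $\nabla\cdot\bigl(f'(e^{-\delta A}u)\nabla e^{-\delta A}u\bigr)-f''(e^{-\delta A}u)|\nabla e^{-\delta A}u|^2$, so that the leading part is measured in $\dot H^{-1}$ with norm $\lesssim\|\nabla e^{-\delta A}u\|_{L^2}\lesssim\delta^{(\alpha-1)/2}t_{i-1}^{-(\alpha-\beta)/2}$ (this last bound is where Lemmas \ref{Operator-Inter} and \ref{real-interpolation2} and the Besov regularity \eqref{B-Inf-estimate} enter). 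Integrating in $\delta$ gains the extra half power, $\theta^{(\alpha+1)/2}$ instead of $\theta^{\alpha/2}$, at the cost of an integrable singularity $(t_n-s)^{-1/2}$ from $A^{1/2}e^{-(t_n-s)A}$. Without this negative-norm/divergence trick, or an equivalent device, your argument cannot reach $O(\tau^\alpha)$ for the temporal error.
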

\begin{theorem}\label{main-theorem}
Let $d=1$ and $\O=(0,1)$. 
Under Assumption \ref{thm-assumption}, 
by choosing $\gamma$ satisfying the following condition  ({$\gamma$ is the constant from the relation $\tau_n\sim t_n^{\gamma} \tau$}):
\be\label{gamma-condition1} 
\max\Big\{\frac12,1-\frac{1+\beta}{\alpha}\Big\} < \gamma<1,
\en
the numerical solution given by the fully discrete Fourier sine collocation method \eqref{FD-method-2} has the following error bound: 
\begin{align*}
\|U_M^n-u(t_n)\|_{L^2(\Omega;L^2(\mathcal{O}))}
\le
C\tau^{\alpha} + CM^{-\alpha}t_n^{-\frac{\alpha-\beta}{2}} .
\end{align*}
\end{theorem}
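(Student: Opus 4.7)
My plan is to reduce Theorem \ref{main-theorem} to Theorem \ref{main-theorem-0} by comparing the Fourier sine collocation solution $U_M^n$ defined by \eqref{FD-method-2} with the auxiliary spectral Galerkin solution $\widetilde U_M^n$ defined by \eqref{FD-method-1} on the same mesh. Since Theorem \ref{main-theorem-0} already gives
$\|\widetilde U_M^n - u(t_n)\|_{L^2(\Omega;L^2(\O))} \le C\tau^\alpha + CM^{-\alpha} t_n^{-\frac{\alpha-\beta}{2}}$, it suffices, by the triangle inequality, to prove
$\|U_M^n - \widetilde U_M^n\|_{L^2(\Omega;L^2(\O))} \le C\tau^\alpha + CM^{-\alpha} t_n^{-\frac{\alpha-\beta}{2}}$. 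Observe that $U_M^1 = \widetilde U_M^1$ (both schemes only apply the semigroup to $U_M^0$), and for $n\ge 2$ the noise term and initial data cancel, leaving
\begin{equation*}
U_M^n - \widetilde U_M^n
= e^{-\tau_n A}(U_M^{n-1}-\widetilde U_M^{n-1})
+ \int_{t_{n-1}}^{t_n} e^{-(t_n-s)A}\bigl[I_M f(U_M^{n-1}) - P_M f(\widetilde U_M^{n-1})\bigr]\,\d s.
\end{equation*}

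The next step is to split the bracketed term as $I_M[f(U_M^{n-1})-f(\widetilde U_M^{n-1})] + (I_M - P_M) f(\widetilde U_M^{n-1})$. For the first piece, I would use the $L^2$-stability of $I_M$ on $S_M$ together with the global Lipschitz property of $f$ from Assumption \ref{thm-assumption}(1) to obtain an $L^2(\Omega;L^2(\O))$ bound by $C\|U_M^{n-1}-\widetilde U_M^{n-1}\|_{L^2(\Omega;L^2(\O))}$. For the second piece, the construction $I_M g := I_M^*[g - g(0)P_M 1\cdot\mathbf 1] + f(0) P_M 1$ (combined with $\widetilde U_M^{n-1}\in S_M\subset \dot H^1$) ensures that $I_M^*$ only acts on functions vanishing at the boundary, so the standard sine-interpolation estimate $\|(I_M^* - P_M) v\|_{L^2(\O)} \le CM^{-\sigma}\|v\|_{\dot H^\sigma(\O)}$ applies for $\sigma\in(1/2,1]$. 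Since $f$ is Lipschitz, one has $\|f(\widetilde U_M^{n-1})-f(0)\|_{\dot H^\sigma(\O)} \le C\|\widetilde U_M^{n-1}\|_{\dot H^\sigma(\O)}$ via the standard Nemytskii mapping property on $\dot H^\sigma$ for $0<\sigma\le 1$; this yields
\begin{equation*}
\|(I_M - P_M) f(\widetilde U_M^{n-1})\|_{L^2(\Omega;L^2(\O))}
\le CM^{-\alpha} \|\widetilde U_M^{n-1}\|_{L^2(\Omega;\dot H^\alpha(\O))}.
\end{equation*}

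I then need to control $\|\widetilde U_M^{n-1}\|_{L^2(\Omega;\dot H^\alpha(\O))}$ by $C t_{n-1}^{-\frac{\alpha-\beta}{2}}$, which I expect to obtain as a byproduct of the analysis leading to Theorem \ref{main-theorem-0}, mirroring the continuous regularity estimate \eqref{B-Inf-estimate} in Proposition \ref{Regularity}. Substituting back and using $\int_{t_{n-1}}^{t_n}\|e^{-(t_n-s)A}\|_{L^2\to L^2}\d s \le \tau_n$, the recurrence becomes
\begin{equation*}
\|U_M^n - \widetilde U_M^n\|_{L^2(\Omega;L^2(\O))}
\le (1+C\tau_n)\|U_M^{n-1}-\widetilde U_M^{n-1}\|_{L^2(\Omega;L^2(\O))}
+ C\tau_n M^{-\alpha} t_{n-1}^{-\frac{\alpha-\beta}{2}},
\end{equation*}
starting from zero at $n=1$. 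A discrete Gronwall argument, using $\sum_{k} \tau_k t_{k-1}^{-\frac{\alpha-\beta}{2}} \le C t_n^{-\frac{\alpha-\beta}{2}+1}$ under the variable-stepsize condition \eqref{gamma-condition1}, gives the desired bound $CM^{-\alpha} t_n^{-\frac{\alpha-\beta}{2}}$ (absorbing the extra $t_n$ factor into the constant on $[0,T]$).

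The main obstacle I anticipate is justifying the Nemytskii bound $\|f(v)-f(0)\|_{\dot H^\sigma(\O)} \le C\|v\|_{\dot H^\sigma(\O)}$ uniformly in $v\in S_M$ for $\sigma$ close to $\alpha$, and more importantly, carrying this estimate through $\widetilde U_M^{n-1}$ which only has the $B^\infty L^2(\Omega;\dot H^\alpha(\O))$ regularity rather than the classical $L^2(\Omega;\dot H^\alpha(\O))$ regularity in the white-noise case $\alpha=\tfrac12$. This will require applying the real-interpolation arguments used for Proposition \ref{Regularity} to the discrete solution $\widetilde U_M^{n-1}$ at a slightly lower smoothness index $\sigma<\alpha$ (which still yields the rate $M^{-\alpha}$ after accounting for the endpoint). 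A secondary issue is the endpoint $\sigma=1$ in the sine-interpolation estimate, but since $\alpha\le 1$ and $\alpha=1$ corresponds to the trace-class case where $\widetilde U_M^{n-1}\in L^2(\Omega;\dot H^1)$ by Remark \ref{RemarkProP}, the standard interpolation bound applies directly.
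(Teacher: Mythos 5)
Your reduction of the collocation scheme to the Galerkin scheme is a genuinely different route from the paper (which proves Theorem \ref{main-theorem} directly via a five-term decomposition of $U_M^n-u(t_n)$ against the mild solution), but as written it founders on the step you yourself flag as the main obstacle. The bound $\|\widetilde U_M^{n-1}\|_{L^2(\Omega;\dot H^\alpha(\O))}\le C t_{n-1}^{-\frac{\alpha-\beta}{2}}$ is \emph{false} in the central case of space-time white noise, $\alpha=\tfrac12$: already for the exact stochastic convolution the series $\sum_k \lambda_k^{-1/2}(1-e^{-2t\lambda_k})$ diverges logarithmically, so the truncated Galerkin solution carries a $(\log M)^{1/2}$ factor in the classical $\dot H^{1/2}$ norm — this is precisely why the paper introduces the $B^\infty L^2(\Omega;\dot H^\alpha)$ norm. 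Your proposed repair, taking $\sigma<\alpha$, fails twice over: it degrades the rate to $M^{-\sigma}$, and for $\alpha=\tfrac12$ it violates the constraint $\sigma>\tfrac12$ in the sine-interpolation estimate \eqref{v-IMv}, so there is no admissible $\sigma$ below $\alpha$ at all. The workable move is the opposite one, and it is what the paper does in \eqref{E2n-estimate-1}: estimate the interpolation error at $\sigma=1$, bound $M^{-1}\|f(\cdot)-f(0)\|_{\dot H^1}\le CM^{-1}\|\cdot\|_{\dot H^1}$ by the chain rule, split the discrete solution as (discrete solution $-\,P_Mu(t_{n-1})$) plus $P_Mu(t_{n-1})$, and apply the inverse inequalities \eqref{inverse2}--\eqref{inverse3} on $S_M$, so that the term $M^{-1}\|\cdot\|_{\dot H^1}$ becomes $\|\cdot\|_{L^2}$ (absorbed by Gronwall) plus $M^{-\alpha}\|u(t_{n-1})\|_{B^\infty L^2(\Omega;\dot H^\alpha)}$, controlled by \eqref{B-Inf-estimate}. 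Only the exact solution's Besov regularity is ever needed; no discrete regularity estimate of $\widetilde U_M^{n-1}$ has to be proved.

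A second, smaller gap: the ``$L^2$-stability of $I_M$ on $S_M$'' does not apply to $f(U_M^{n-1})-f(\widetilde U_M^{n-1})$, which is not in $S_M$, and trigonometric interpolation is not $L^2$-bounded on general functions. This piece can be rescued via the discrete $\ell^2$--$L^2$ norm equivalence at the collocation nodes together with the pointwise Lipschitz bound $|f(U_M^{n-1}(x_j))-f(\widetilde U_M^{n-1}(x_j))|\le C|U_M^{n-1}(x_j)-\widetilde U_M^{n-1}(x_j)|$ and the identity $I_M^*(U_M^{n-1}-\widetilde U_M^{n-1})=U_M^{n-1}-\widetilde U_M^{n-1}$, but that argument must be made explicit. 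Finally, note that your scheme presupposes an independent proof of Theorem \ref{main-theorem-0}, whereas the paper proves Theorem \ref{main-theorem} in full and only asserts that the Galerkin case is analogous; with the two fixes above your comparison argument could be made to work, but the technical core would end up reproducing the paper's treatment of $\mathcal{E}_2^n$ in any case.
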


Since the spatial discretizations in \eqref{FD-method-1} and \eqref{FD-method-2} are both by spectral methods, the proofs for Theorems \ref{main-theorem-0} and \ref{main-theorem} are similar. Therefore, 
we present the proof for Theorem \ref{main-theorem} in the next section and omit the detailed proof for Theorem \ref{main-theorem-0}.

\section{Proof of Theorem \ref{main-theorem}}\label{proof_of_main}


The proof of Theorem \ref{main-theorem} is divided into five subsections. 
In Section \ref{section:interpolation}, we present the real interpolation properties of the stochastic Besov spaces 
$B^q L^p(\Omega;\dot H^s(\O))$, $s\in[0,2]$. These properties are used to prove the sharp convergence order in the case of one-dimensional space-time white noise, which satisfies the second condition in 
Assumption \ref{thm-assumption} (3) with $\alpha=\frac12$ for the stochastic Besov norm $\|\cdot\|_{B^\infty L^2(\Omega;\dot H^\alpha(\O))}$, but not for the  Sobolev norm $\|\cdot\|_{L^2(\Omega;\dot H^\alpha(\O))}$. 
The error estimates are presented in Sections \ref{section:error}--\ref{section:noise errors}.

%

\subsection{Real interpolation results}
\label{section:interpolation}

The $K$-functional and $J$-functional are defined as 
\begin{align*}
K(t,f;X_0, X_1) &=\inf_{f=f_0+f_1}\|f_0\|_{X_0}+t\|f_1\|_{X_1} &&
\forall\, f\in X_0+ X_1,\,\,\, \forall\, t>0 ,\\
J(t,f;X_0, X_1) &=\max\{\|f\|_{X_0},t\|f\|_{X_1}\} 
&&\forall f\in X_0\cap X_1,\,\,\, \forall\, t>0 . 
\end{align*}

\begin{definition}[Discrete $K_{\theta,q}$-functor{\cite[Page 41, Lemma 3.1.3]{Bergh1976}}]\label{dis-K-functor} 
Let $0< \theta <1 $, $1\le q\le \infty$ and let $(X_0,X_1)$ be a compatible couple. 
The interpolation space $(X_0,X_1)_{\theta,q;K}$ consists of functions $f\in X_0+X_1$ such that 
$\|f\|_{(X_0,X_1)_{\theta,q;K}}<\infty$, where 
$$
\|f\|_{(X_0,X_1)_{\theta,q;K}} : = 
\left\{
\begin{aligned}
&\displaystyle 
\Big[\sum_{j\in\Z}\Big|a^{-j \theta}K(a^{j},f; X_0, X_1)\Big|^q\Big]^{1/q}, 
&&  1\le q<\infty,\\
&\displaystyle
\sup_{j\in\Z} a^{-j \theta}K(a^{j},f; X_0, X_1),&& q=\infty,
\end{aligned}
\right. 
$$
in which $a$ is any fixed positive constant.
\end{definition}
\begin{definition}[Discrete $J_{\theta,q}$-functor{\cite[Page 43, Lemma 3.2.3]{Bergh1976}}]\label{dis-J-functor}
Let $0< \theta <1 $, $1\le q\le \infty$ and let $(X_0,X_1)$ be a compatible couple. 
The interpolation space $(X_0,X_1)_{\theta,q;J}$ consists of function $f\in X_0+X_1$ such that $\|f\|_{(X_0,X_1)_{\theta,q;J}}<\infty$, where 
$$
\|f\|_{(X_0,X_1)_{\theta,q;J}}
:= 
\left\{
\begin{aligned}
&\displaystyle
\inf_{f=\sum_{j}f_{j} }\Big(\sum_{j\in\Z}\Big|a^{-j \theta}J(a^{j},f_{j};X_0, X_1)\Big|^q\Big)^{\frac1q}, &&  1\le q<\infty,\\ 
&\displaystyle
\inf_{f=\sum_{j}f_{j} }\Big(\sup_{j\in\Z}a^{-j \theta}J(a^{j},f_{j}; X_0, X_1)\Big), && q=\infty,
\end{aligned}
\right. 
$$
in which $a$ is any fixed positive constant, and the infimum extends over all possible decompositions 
\be\label{f-fnu}
f=\sum_{j\in\Z}f_{j}  
\quad\mbox{with\, $f_{j}\in X_0\cap X_1$ and convergence in\, $X_0+X_1$} . 
\en
\end{definition}

It is known that $(X_0,X_1)_{\theta,q;K}$ and $(X_0,X_1)_{\theta,q;J}$ are the same vector space with equivalent norms; see \cite[Page 44, Theorem 3.3.1]{Bergh1976}). 
For simplicity, we denote by $(X_0,X_1)_{\theta,q}$ the common vector space of $(X_0,X_1)_{\theta,q;K}$ and $(X_0,X_1)_{\theta,q;J}$, with the norm {$\|\cdot\|_{(X_0,X_1)_{\theta,q}}$}. 

\begin{lemma}[\mbox{\cite[Page 301, Theorem 1.12]{Bennett-1988}}]\label{Operator-Inter}
Let $(X_0,X_1)$ and $(Y_0,Y_1)$ be compatible couples 
and let $0<\theta<1$, $1\le q<\infty$ or $0\le \theta\le 1$, $q=\infty$. Let 
$T:X_0+X_1\rightarrow Y_0+Y_1$ be a linear operator such that $T$ maps
{$X_i$ to $Y_i$} 
and
$$
\|Tf \|_{Y_i}
\le M_i \|f\|_{X_i} \quad\forall\, f\in X_i,\,\, i=0,1 . 
$$
Then $T$ maps $(X_0,X_1)_{\theta,q}$ to $(Y_0,Y_1)_{\theta,q}$ and 
$$
\|Tf\|_{(Y_0,Y_1)_{\theta,q}}\le 
M_0^{1-\theta}M_1^{\theta}\|f\|_{(X_0,X_1)_{\theta,q}}
\quad\forall\, f\in (X_0,X_1)_{\theta,q}. 
$$
\end{lemma}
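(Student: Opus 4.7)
The plan is to reduce the statement to a pointwise inequality for the $K$-functional and then propagate it through the definition of the interpolation norm.

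First, I would establish the key inequality: for every $t>0$ and every $f\in X_0+X_1$,
\[
K(t,Tf;Y_0,Y_1)\;\le\; M_0\,K\!\left(tM_1/M_0,\;f;\;X_0,X_1\right).
\]
The proof is immediate: for any decomposition $f=f_0+f_1$ with $f_i\in X_i$, linearity gives $Tf=Tf_0+Tf_1$ with $Tf_i\in Y_i$, and the operator norm bounds yield
\[
\|Tf_0\|_{Y_0}+t\|Tf_1\|_{Y_1}
\;\le\; M_0\|f_0\|_{X_0}+tM_1\|f_1\|_{X_1}
\;=\;M_0\bigl(\|f_0\|_{X_0}+(tM_1/M_0)\|f_1\|_{X_1}\bigr).
\]
Taking the infimum over all admissible decompositions of $f$ on the right-hand side gives the bound.

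Next, I would substitute into the interpolation norm. Using the equivalence between the discrete $K_{\theta,q}$-norm of Definition~\ref{dis-K-functor} and the continuous K-functor norm $\bigl(\int_0^\infty (t^{-\theta}K(t,f))^q\,dt/t\bigr)^{1/q}$ (Bergh--L\"ofstr\"om, Theorem~3.3.1), I would work with the integral form, apply the pointwise K-inequality above, and change variables $s=tM_1/M_0$:
\[
\|Tf\|_{(Y_0,Y_1)_{\theta,q}}^{q}
\;\le\; M_0^{q}\int_0^\infty t^{-\theta q}K(tM_1/M_0,f)^{q}\,\tfrac{dt}{t}
\;=\; M_0^{(1-\theta)q}M_1^{\theta q}\,\|f\|_{(X_0,X_1)_{\theta,q}}^{q}.
\]
The $q=\infty$ case is identical with $\sup_{t>0}$ replacing the integral, and the boundary cases $\theta\in\{0,1\}$ follow directly from $K(t,f;X_0,X_1)\le\min\{\|f\|_{X_0},t\|f\|_{X_1}\}$.

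The main (and rather mild) obstacle is notational: the discrete K-functor is indexed on a geometric net $\{a^{j}\}_{j\in\Z}$ which is not preserved by the multiplicative shift $t\mapsto tM_1/M_0$. I would handle this either by passing to the continuous K-functor as above, or by shifting the summation index $j\mapsto j+j_0$ with $j_0=\lfloor\log_a(M_1/M_0)\rfloor$ and absorbing the residual factor $a^{\{\log_a(M_1/M_0)\}}$ into the uniform equivalence constant between the discrete and continuous norms. No genuine analytic difficulty arises; the whole content of the lemma is contained in the one-line K-functional inequality combined with a change of variables.
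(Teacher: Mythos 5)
The paper does not prove this lemma at all --- it is quoted verbatim from Bennett and Sharpley \cite[Page 301, Theorem 1.12]{Bennett-1988} --- so there is no in-paper argument to compare yours against. Your proof is the standard (and correct) one: the pointwise bound $K(t,Tf;Y_0,Y_1)\le M_0\,K(tM_1/M_0,f;X_0,X_1)$ plus the substitution $s=tM_1/M_0$ in the continuous $K$-norm yields exactly $M_0^{1-\theta}M_1^{\theta}$, and your treatment of the $q=\infty$ and endpoint cases is fine. The one point worth being explicit about is the caveat you already flag: with the paper's discrete norm of Definition~\ref{dis-K-functor} the multiplicative shift does not respect the net $\{a^j\}_{j\in\Z}$, so the literal constant $M_0^{1-\theta}M_1^{\theta}$ is only obtained for the continuous $K$-functor norm, and the discrete version inherits it up to the (harmless, $a$-dependent) equivalence constant; since the paper only ever invokes the lemma up to generic constants $C$, this is immaterial here.
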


The main results of this subsection are presented in the following lemma. 
\begin{lemma}\label{real-interpolation2}
For all $1\le p,q\le \infty$ and $0<\theta<1$ there holds
\begin{align}\label{Inter-Besov1}
&B^qL^p(\Om;\dot{H}^s(\O))
=\big(L^p(\Omega;\dot{H}^{s_0}(\O)),L^p(\Omega;\dot{H}^{s_1}(\O))\big)_{\theta,q},\\
&\label{Inter-Besov}
{\big(B^{\infty}L^p(\Omega;\dot{H}^{s_0}(\O),B^{\infty}L^p(\Omega;\dot{H}^{s_1}(\O)\big)_{\theta,q}=B^qL^p(\Omega;\dot{H}^s(\O))} , 
\end{align}
where $-2\le s_0< s_1\le 2$ and $s=(1-\theta)s_0+\theta s_1$. 
\end{lemma}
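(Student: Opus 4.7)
I would prove both identities simultaneously via the dyadic decomposition $v=\sum_{j\ge 1}\Pi_j v$ combined with the discrete $J$/$K$-functor characterizations of Definitions \ref{dis-K-functor}--\ref{dis-J-functor}, following the classical template of the real-interpolation proof for Besov spaces.

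The preparatory step is a single-block scaling relation. By Weyl's law \eqref{weyl}, $\lambda_k\sim 2^{2j/d}$ uniformly for $k\in[2^{j-1},2^j)$, so for any $g$ in the range of $\Pi_j$ one obtains
\[
\|g\|_{L^p(\Omega;\dot H^{s_0})}\sim 2^{-j\theta\sigma}\|g\|_{L^p(\Omega;\dot H^s)},\qquad\|g\|_{L^p(\Omega;\dot H^{s_1})}\sim 2^{j(1-\theta)\sigma}\|g\|_{L^p(\Omega;\dot H^s)},
\]
where $\sigma:=(s_1-s_0)/d$, using $s-s_0=\theta(s_1-s_0)$ and $s_1-s=(1-\theta)(s_1-s_0)$. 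I would fix the base parameter $a:=2^\sigma$ in the discrete functor definitions.

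For \eqref{Inter-Besov1}, the direction ``$\supset$'' follows from the $J$-method applied to the canonical decomposition $f_{-j}:=\Pi_j v$ (and $f_k:=0$ otherwise); convergence in $X_0+X_1$ is guaranteed by the geometric factor $2^{-j\theta\sigma}$. The scaling relation above forces both entries of $J(a^{-j},\Pi_j v)=\max(\|\Pi_j v\|_{X_0},a^{-j}\|\Pi_j v\|_{X_1})$ to equal $a^{-j\theta}\|\Pi_j v\|_{L^p(\Omega;\dot H^s)}$ up to constants, so the weight $a^{j\theta}$ cancels exactly, giving $\|v\|_{(X_0,X_1)_{\theta,q;J}}\lesssim\|v\|_{B^qL^p(\Omega;\dot H^s)}$. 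For the direction ``$\subset$'', I would use the $K$-method: since $\Pi_j$ is a norm-one projection on both endpoint spaces, $K(t,\Pi_j v;X_0,X_1)\le K(t,v;X_0,X_1)$; conversely, the infimum defining $K(a^{-j},\Pi_j v)$ can be restricted to decompositions within $\mathrm{range}(\Pi_j)$ (apply $\Pi_j$ componentwise to any candidate decomposition), and then the scaling relation together with the triangle inequality in $L^p(\Omega;\dot H^s)$ yields $K(a^{-j},\Pi_j v)\gtrsim a^{-j\theta}\|\Pi_j v\|_{L^p(\Omega;\dot H^s)}$. Summing the discrete $K$-functor expression over the sub-indices $k=-j$, $j\ge 1$, produces the lower bound $\|v\|_{(X_0,X_1)_{\theta,q;K}}\gtrsim\|v\|_{B^qL^p(\Omega;\dot H^s)}$.

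For \eqref{Inter-Besov}, the proof is essentially verbatim after two observations: (i) for any $g$ in the range of $\Pi_j$, $\|g\|_{B^\infty L^p(\Omega;\dot H^{s_i})}=\|g\|_{L^p(\Omega;\dot H^{s_i})}$, so the single-block scaling relation is unchanged; (ii) $\Pi_j$ is still a contraction on $B^\infty L^p(\Omega;\dot H^{s_i})$, since $\sup_k\|\Pi_k\Pi_j f\|_{L^p(\Omega;\dot H^{s_i})}=\|\Pi_j f\|_{L^p(\Omega;\dot H^{s_i})}\le\sup_k\|\Pi_k f\|_{L^p(\Omega;\dot H^{s_i})}$. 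The step I expect to be most delicate is the reduction of a general decomposition $\Pi_j v=g_0+g_1$ with $g_i$ in the endpoint spaces to one with $g_i\in\mathrm{range}(\Pi_j)$, which requires that $\Pi_j$ be a contraction on each endpoint space --- true in both the $L^p(\Omega;\dot H^{s_i})$ and the $B^\infty L^p(\Omega;\dot H^{s_i})$ settings --- after which the remaining scalar minimization is solved by the scaling relation. Index-matching between the sum over $j\ge 1$ in the Besov definition and the sum over $k\in\mathbb Z$ in the discrete functors is handled by extending by zero.
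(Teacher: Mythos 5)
Your proposal is correct, and for \eqref{Inter-Besov1} it follows essentially the same route as the paper: the single-block scaling $\|\Pi_j v\|_{\dot H^{s_i}}\sim 2^{js_i/d}\|\Pi_j v\|_{L^2}$ from Weyl's law, the $J$-method applied to the canonical decomposition $v=\sum_j\Pi_j v$ for one embedding, and a $K$-functional lower bound of the form $K(2^{j(s_0-s_1)/d},v)\gtrsim 2^{j(s_0-s)/d}\|\Pi_j v\|_{L^p(\Omega;\dot H^s)}$ for the other (the paper phrases this as a direct triangle-inequality estimate of $\|\Pi_j f\|$ over an arbitrary decomposition $f=f_0+f_1$, which is the same inequality as your ``restrict the infimum to $\mathrm{range}(\Pi_j)$'' argument). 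The genuine difference is in \eqref{Inter-Besov}: the paper obtains it from \eqref{Inter-Besov1} by the reiteration theorem, whereas you rerun the block-wise argument directly with endpoint spaces $B^\infty L^p(\Omega;\dot H^{s_i})$, using that the $B^\infty$ norm of a single block coincides with the plain norm and that $\Pi_j$ is a contraction on $B^\infty L^p(\Omega;\dot H^{s_i})$. Your route costs a second pass through the same computation but buys something real: the reiteration argument needs each $B^\infty L^p(\Omega;\dot H^{s_i})$ to be realized as a $(\cdot,\cdot)_{\theta_i,\infty}$ space between a common couple $L^p(\Omega;\dot H^{\sigma_0})$, $L^p(\Omega;\dot H^{\sigma_1})$ with $\sigma_0<s_0<s_1<\sigma_1$ inside the admissible range $[-2,2]$, which degenerates at the endpoint cases $s_0=-2$ or $s_1=2$ permitted by the lemma's hypotheses; your direct proof treats all cases uniformly and avoids appealing to the reiteration theorem altogether. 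Both arguments are sound for the values of $s_0,s_1$ actually used later in the error analysis.
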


\begin{proof}
If $2^{j-1}\le k\le 2^j-1$, then $\lambda_k^s\sim k^{\frac{2s}{d}}\sim 2^{\frac{2js}{d}}$, which implies that 
\begin{align}\label{Pi-hs}
{\|\Pi_jf\|_{\dot{H}^s}\sim 2^{\frac{js}{d}}\|\Pi_jf\|_{L^2(\O)}.}
\end{align}
Hence 
for any $f\in L^p(\Omega;\dot{H}^{s_0})+L^p(\Omega;\dot{H}^{s_1})$, there exists a decomposition $f=f_0+f_1$ with $f_0\in L^p(\Omega;\dot{H}^{s_0})$ and $f_1\in L^p(\Omega;\dot{H}^{s_1})$. Since
\begin{align*}
&\|\Pi_{j}f\|_{L^p(\Omega;L^2(\O))} \\
&\le 
\|\Pi_{j}f_0\|_{L^p(\Omega;L^2(\O))}
+
\|\Pi_{j}f_1\|_{L^p(\Omega;L^2(\O))}\\
&= 
2^{-\frac{j s_0}{d}}\|2^{\frac{j s_0}{d}}\Pi_{j}f_0\|_{L^p(\Omega;L^2(\O))}
+
2^{-\frac{j s_1}{d}}\|2^{\frac{j s_1}{d}}\Pi_{j}f_1\|_{L^p(\Omega;L^2(\O))}\\
&\le
2^{-\frac{j s_0}{d}}\Big\|\big(\sum_{j=1}^{\infty}2^{\frac{2j s_0}{d}}\|\Pi_{j}f_0\|_{L^2(\O)}^2\big)^{\frac12}\Big\|_{L^p(\Omega)}
+ 
2^{-\frac{j s_1}{d}}\Big\|\big(\sum_{j=1}^{\infty}2^{\frac{2j s_1}{d}}\|\Pi_{j}f_1\|_{L^2(\O)}^2\big)^{\frac12}\Big\|_{L^p(\Omega)}\\
&\le 
C2^{-\frac{j s_0}{d}}\|f_0\|_{L^p(\Omega;\dot{H}^{s_0}(\O))}
+
C2^{-\frac{j s_1}{d}}\|f_1\|_{L^p(\Omega;\dot{H}^{s_1}(\O))},
\end{align*}
it follows that 
\begin{align*}
\|\Pi_{j}f\|_{L^p(\Om;L^2(\O))}
&\le 
C\inf_{f=f_0+f_1} 2^{-\frac{j s_0}{d}}\big(\|f_0\big\|_{L^p(\Omega;\dot{H}^{s_0}(\O))}
+
2^{\frac{j (s_0-s_1)}{d}}\|f_1\big\|_{L^p(\Omega;\dot{H}^{s_1}(\O))}\big)\\
&\le
C2^{-\frac{j s_0}{d}}K(2^{\frac{j(s_0-s_1)}{d}},f;X_0,X_1),
\end{align*}
where $X_0=L^p(\Om;\dot{H}^{s_0}(\O))$ and $X_1=L^p(\Om;\dot{H}^{s_1}(\O))$. This further implies that 
\begin{align*}
2^{\frac{j s}{d}}\|\Pi_{j}f\|_{L^p(\Om;L^2(\O))}
&\le 
C2^{\frac{j (s-s_0)}{d}}K(2^{\frac{j(s_0-s_1)}{d}},f;X_0,X_1)\\
&\le
C(2^{\frac{s_0-s_1}{d}})^{-j \theta}K((2^{\frac{(s_0-s_1)}{d}})^{j},f;X_0,X_1).
\end{align*}
By considering the $\ell^q$ norm of the inequality above with respect to $j$ and using Definition \ref{dis-K-functor}, 
we obtain 
\begin{align*}
\|f\|_{B^qL^p(\Omega;\dot{H}^s(\O))}
&\le
C\|f\|_{(L^p(\Omega;\dot{H}^{s_0}(\O)),L^p(\Omega;\dot{H}^{s_1}(\O)))_{\theta,q}}
\quad\forall\, 1\le q\le \infty , 
\end{align*}
which means that 
\begin{align}\label{Result-real-1}
(L^p(\Omega;\dot{H}^{s_0}(\O)),L^p(\Omega;\dot{H}^{s_1}(\O)))_{\theta,q}\hookrightarrow B^qL^p(\Omega;\dot{H}^s(\O))
\quad\forall\, 1\le q\le \infty . 
\end{align}

Conversely, since $s_0<s<s_1$, if $f\in B^qL^p(\Omega;\dot{H}^s(\O))$ then 
$$f\in L^p(\Om;\dot{H}^{s_0}(\O))=L^p(\Omega;\dot{H}^{s_0}(\O))+ L^p(\Omega;\dot{H}^{s_1}(\O))$$ 
and  
\begin{align*}
\|f\|_{L^p(\Om;\dot{H}^{s_0}(\O))}
&\le
C\Big\|\big(\sum_{j=1}^{\infty}2^{\frac{2j s_0}{d}}\|\Pi_{j}f\|_{L^2(\O)}^2\big)^{\frac12}\Big\|_{L^p(\Omega)}\\
&\le
C\sum_{j=1}^{\infty}2^{\frac{j (s_0-s)}{d}} 2^{\frac{j s}{d}}\|\Pi_{j}f\|_{L^p(\Omega;L^2(\O))}\\
&\le
C\Big(\sum_{j=1}^{\infty}2^{\frac{j (s_0-s)}{d}q'} \Big)^{\frac{1}{q'}}
\Big(\sum_{j=1}^{\infty}\|\Pi_{j}f\|_{L^p(\Omega;\dot{H}^s(\O))}^q\Big)^{\frac{1}{q}}\\
&\le
C\|f\|_{B^qL^p(\Om;\dot{H}^s(\O))},
\end{align*}
{where $q'$ is the constant satisfying $\frac{1}{q}+\frac{1}{q'}=1$}.
Substituting this inequality into the expression $J(t,f;X_0, X_1) =\max\{\|f\|_{X_0},t\|f\|_{X_1}\} $ yields 
\begin{align*}
&2^{\frac{j(s-s_0)}{d}}J(2^{\frac{j(s_0-s_1)}{d}},\Pi_{j}f;L^p(\Om;\dot{H}^{s_0}(\O)),L^p(\Om;\dot{H}^{s_1}(\O)))\\
&=
2^{\frac{j(s-s_0)}{d}}\max\big\{\|\Pi_{j}f\|_{L^p(\Om;\dot{H}^{s_0}(\O))},2^{\frac{j(s_0-s_1)}{d}}\|\Pi_{j}f\|_{L^p(\Om;\dot{H}^{s_1}(\O))}\big\}\\
&\le 
C2^{\frac{j s}{d}}\|\Pi_{j}f\|_{L^p(\Om;L^2(\O))}\\
&\le 
C\|\Pi_{j}f\|_{L^p(\Om;\dot{H}^s(\O))}.
\end{align*}
In view of the inequality above and Definition \ref{dis-J-functor}, we have 
\begin{align*}
\|f\|_{(L^p(\Om;\dot{H}^{s_0}(\O)),L^p(\Om;\dot{H}^{s_1}(\O)))_{\theta,q}}
\le
C\|f\|_{B^qL^p(\Omega;\dot{H}^s(\O))}
\quad\forall\, 1\le q\le \infty , 
\end{align*}
which means that 
\begin{align}\label{Result-real-2}
B^qL^p(\Omega;\dot{H}^s(\O))
\hookrightarrow (L^p(\Omega;\dot{H}^{s_0}(\O)),L^p(\Omega;\dot{H}^{s_1}(\O)))_{\theta,q} 
\quad\forall\, 1\le q\le \infty . 
\end{align}

The two results in \eqref{Result-real-1} and \eqref{Result-real-2} imply \eqref{Inter-Besov1}. 
Then \eqref{Inter-Besov} follows from \eqref{Inter-Besov1} as a result of the reiteration theorem in the real interpolation theory; see \cite[Page 50, Theorem 3.5.3]{Bergh1976}. 
\end{proof}

{
The following inverse inequality will be often used in the error estimation: If $f\in S_M$ and $-2\le s_0\le s\le 2$ then 
\begin{align}\label{inverse1}
\|f\|_{\dot{H}^{s}(\O)}
=
\Big(\sum_{k=1}^{M}\lambda_k^{s} |(f,\phi_k)|^2 \Big)^{\frac12}
\le
C\Big(\lambda_M^{s-s_0}\sum_{k=1}^{M}\lambda_k^{s_0} |(f,\phi_k)|^2\Big)^{\frac12}
\le 
CM^{s-s_0}\|f\|_{\dot{H}^{s_0}(\O)} . 
\end{align}
Correspondingly, for a stochastic function $f\in L^p(\Omega;S_M)\hookrightarrow L^p(\Omega;\dot H^s(\O))$ the following result holds: 
\begin{align}\label{inverse2}
\|f\|_{L^p(\Omega;\dot{H}^{s}(\O))}
\le 
CM^{s-s_0}\|f\|_{L^p(\Omega;\dot{H}^{s_0}(\O))} 
\quad\mbox{for $-2\le s_0\le s\le 2$}. 
\end{align}
By choosing $-2\le s_2<s_0<s_1\le s\le 2 $ and consider the real interpolation between the two results, 
\begin{align*}
&\|f\|_{L^p(\Omega;\dot{H}^{s}(\O))}
\le 
CM^{s-s_1}\|f\|_{L^p(\Omega;\dot{H}^{s_1}(\O))} 
\quad\mbox{and}\quad
\|f\|_{L^p(\Omega;\dot{H}^{s}(\O))}
\le 
CM^{s-s_2}\|f\|_{L^p(\Omega;\dot{H}^{s_2}(\O))}  ,
\end{align*}
we obtain the following inequality for $f\in L^p(\Omega;S_M)\hookrightarrow L^p(\Omega;\dot H^s(\O))$: 
\begin{align}\label{inverse3}
&\|f\|_{L^p(\Omega;\dot{H}^{s}(\O))}
\le 
CM^{s-s_0}\|f\|_{B^\infty L^p(\Omega;\dot{H}^{s_0}(\O))} 
\quad\mbox{for $-2<s_0<s\le 2$}.
\end{align}
}

\subsection{The error equation}
\label{section:error}


By iterating the second relation in \eqref{FD-method-2} for $n\ge 2$, 
the numerical solution in \eqref{FD-method-2} can be written as 
\begin{align}\label{expr-UMn}
U_M^n 
&=
 e^{-(t_n-\tau_1) A}U_M^{1}
 +
\sum_{j=2}^n \int_{t_{j-1}}^{t_j}e^{-(t_n-s)A}I_Mf(U_M^{j-1})ds
 +
 \int_{t_1}^{t_n}e^{-(t_n-s)A}P_MdW(s) .
\end{align}
The difference between \eqref{expr-UMn} and \eqref{mild-solution} yields the following expression for the error of the numerical solution: 
\begin{align}\label{error-repr}
U_M^n-u(t_n) 
&=
e^{-(t_n-\tau_1)A}U_M^1-e^{-(t_n-\tau_1)A}u(t_1) \notag\\
&\quad +
\sum_{j=2}^n\int_{t_{j-1}}^{t_j}\big(e^{-(t_n-s)A}I_Mf(U_M^{j-1})-e^{-(t_n-s)A}f(U_M^{j-1})\big)ds \notag\\
&\quad +
\sum_{j=2}^n\int_{t_{j-1}}^{t_j}\big(e^{-(t_n-s)A}f(U_M^{j-1})-e^{-(t_n-s)A}f(u(t_{j-1}))\big)ds \notag\\
&\quad+
\sum_{j=2}^n\int_{t_{j-1}}^{t_j}\big(e^{-(t_n-s)A}f(u(t_{j-1}))-e^{-(t_n-s)A}f(u(s))\big)ds \notag\\
&\quad +
\int_{t_1}^{t_n}\big(e^{-(t_n-s)A}P_M-e^{-(t_n-s)A}\big)dW(s) \notag\\
&=: 
\mathcal{E}_1^n+\mathcal{E}_2^n+\mathcal{E}_3^n+\mathcal{E}_4^n+\mathcal{E}_5^n , 
\end{align}
which is decomposed into five parts. 

By using the first relation in \eqref{FD-method-2} (when $n=1$), the first part on the right-hand side of \eqref{error-repr} can be further written as 
\begin{align} \label{estimate-E11}
\mathcal{E}_1^n
=&\ 
e^{- t_n A} P_Mu^0 - e^{- t_n A} u^0 \notag\\
&\ 
-e^{- (t_n-t_1) A} \int_0^{t_1} e^{-(t_1-s)A}f(u(s))\d s
-e^{- (t_n-t_1) A} \int_0^{t_1} e^{-(t_1-s)A}\d W(s) . 
\end{align} 
The first term in \eqref{estimate-E11} can be estimated by using the 
classical error estimates of spectral method for the heat equation, i.e.,
\begin{align*} 
\|e^{- t_n A} P_Mu^0 - e^{- t_n A} u^0\|_{L^2(\Om;L^2(\O))} 
\le &\ 
CM^{-\alpha}t_n^{-\frac{\alpha-\beta}{2}} \|u^0\|_{L^2(\Om;\dot{H}^{\beta}(\O))}
 . 
\end{align*} 
The second and third terms in \eqref{estimate-E11} can be estimated by using the regularity results in Proposition \ref{Regularity} and 
Assumption \ref{thm-assumption} (3) 
 on the noise. Then we obtain 
\begin{align*} 
\|\mathcal{E}_1^n\|_{L^2(\Om;L^2(\O))} 
\le &\ 
CM^{-\alpha}t_n^{-\frac{\alpha-\beta}{2}} \|u^0\|_{\dot{H}^{\beta}(\O)}
+ \int_0^{\tau_1} C(1+s^{\frac{\beta}{2}}) \d s
+ C\tau_1^{\frac{\alpha}{2}}
\notag\\
\le&\
CM^{-\alpha}t_n^{-\frac{\alpha-\beta}{2}} 
+ C(\tau_1+\tau_1^{1+\frac{\beta}{2}})
+ C\tau_1^{\frac{\alpha}{2}} \notag\\
\le&\ 
CM^{-\alpha}t_n^{-\frac{\alpha-\beta}{2}}+C(\tau^{\frac{1}{1-\gamma}} + \tau^{\frac{1}{1-\gamma}(1+\frac{\beta}{2})} )
+C\tau^{\frac{1}{1-\gamma}\frac{\alpha}{2}} . 
\end{align*} 
If $\gamma\ge \max\big(\frac12, 1-(1+\frac{\beta}{2})\frac{1}{\alpha}\big)$, then the inequality above reduces to the following desired result: 
\begin{align} \label{estimate-E1} 
\|\mathcal{E}_1^n\|_{L^2(\Om;L^2(\O))} 
\le&\ 
CM^{-\alpha}t_n^{-\frac{\alpha-\beta}{2}} +C\tau^{\alpha} . 
\end{align} 

{
The following estimates are known for the standard trigonometric interpolation $I_M^*$ and $L^2$ projection $P_M$:  
\begin{align}\label{v-IMv}
\|v - I_M^* v\|_{L^2(\Omega;L^2(\O))}
&\le
CM^{-s} \|v\|_{L^2(\Omega;\dot H^s(\O))} \quad\mbox{for}\,\,\, v\in \dot H^s(\O), \quad \frac12<s\le 2 ,\\
\label{v-PMv}
\|v - P_M v\|_{L^2(\Omega;L^2(\O))}
&\le
CM^{-s} \|v\|_{L^2(\Omega;\dot H^s(\O))} \quad\mbox{for}\,\,\, v\in \dot H^s(\O), \quad 0\le s\le 2 , 
\end{align}
where the error estimates of trigonometric interpolation for periodic functions can be found in \cite[Page 209, Theorem 11.8]{kress-2014}; the error estimates of trigonometric sine interpolation for functions satisfying the Dirichlet boundary condition on $\O=(0,1)$ follow by extending the function to $[-1,1]$ as a periodic odd function. 
Since odd extension of a function preserves the continuity of the function and its first-order derivative, it follows that the odd extension maps $\dot H^s(0,1)$ to the periodic function space $H^s_{\rm per}[-1,1]$ for $s\in[0,2]$ (by considering the real interpolation between the two endpoint cases $s=0$ and $s=2$).
 

For the function $g=f(U_M^{j-1})-f(0)$ which satisfies the zero boundary condition, the following error estimate holds: 
\begin{align*}
\|I_M^*g - g \|_{L^2(\Omega;L^2(\O))}
\le CM^{-1} \| g \|_{L^2(\Omega;\dot H^1(\O))} .
\end{align*}
Since $e^{-(t-s)A}$ commutes with the projection operator $P_M$, it follows that 
\begin{align*}
\|e^{-(t-s)A}(P_M1-1) \|_{L^2(\Omega;L^2(\O))}
&=\|P_M e^{-(t-s)A}1 - e^{-(t-s)A}1 \|_{L^2(\Omega;L^2(\O))} \\
&\le CM^{-1} \| e^{-(t-s)A}1  \|_{L^2(\Omega;\dot H^1(\O))} \\
&\le C(t-s)^{-\frac{1}{2}} M^{-1} \| 1 \|_{L^2(\Omega;L^2(\O))} \\
&\le C(t-s)^{-\frac{1}{2}} M^{-1} .
\end{align*}
Therefore, by using expression $I_Mf(U_M^{j-1}) = I_M^*g + f(0)P_M1$ and the triangle inequality, 
\begin{align*}
&\|e^{-(t-s)A}[I_Mf(U_M^{j-1}) - f(U_M^{j-1})]\|_{L^2(\Omega;L^2(\O))} \\
&\le \| e^{-(t-s)A} (I_M^*g - g) \|_{L^2(\Omega;L^2(\O))}
     + |f(0)| \| e^{-(t-s)A}(P_M1-1)  \|_{L^2(\Omega;L^2(\O))} \\ 
&\le CM^{-1} \| f(U^{j-1}_M)-f(0) \|_{L^2(\Omega;\dot H^1(\O))} 
     + C|f(0)| (t-s)^{-\frac{1}{2}} M^{-1} .
\end{align*}
\!\!}
Then, using \eqref{v-IMv}--\eqref{v-PMv} and the inverse inequalities in \eqref{inverse2}--\eqref{inverse3}, the second term on the right-hand side of \eqref{error-repr} can be estimated as follows for $\alpha\in(0,1)$: 
\begin{align}\label{E2n-estimate-1}
&\|\mathcal{E}_2^n\|_{L^2(\Om;L^2(\O))} \notag\\
&\le
C\sum_{j=2}^n\int_{t_{j-1}}^{t_j}
M^{-1} \big( \|f(U^{j-1}_M)-f(0)\|_{L^2(\Om;\dot H^{1}(\O))}
 + |f(0)| (t-s)^{-\frac{1}{2}} \big) \d s \notag\\
&\le
{
C\sum_{j=2}^n\int_{t_{j-1}}^{t_j}M^{-1}
\big( \|U^{j-1}_M\|_{L^2(\Om;\dot H^{1}(\O))} + (t-s)^{-\frac{1}{2}} \big) \d s
} \notag\\
&\le
{C\sum_{j=2}^n\int_{t_{j-1}}^{t_j}M^{-1} \big(\|U^{j-1}_M-P_Mu(t_{j-1})\|_{L^2(\Om;\dot H^{1}(\O))}
+\|P_Mu(t_{j-1})\|_{L^2(\Om;\dot H^{1}(\O))} \big) \d s
  + CM^{-1} } \notag\\
  &\le
{C\sum_{j=2}^n\int_{t_{j-1}}^{t_j}\big(\|U^{j-1}_M-P_Mu(t_{j-1})\|_{L^2(\Om;L^2(\O))}+M^{-\alpha}\|P_Mu(t_{j-1})\|_{B^\infty L^2(\Om;\dot H^{\alpha}(\O))}\big) \d s + CM^{-1} }\notag\\
&\le
C\sum_{j=2}^n\int_{t_{j-1}}^{t_j}\big(\|U^{j-1}_M-P_Mu(t_{j-1})\|_{L^2(\Om;L^2(\O))}+M^{-\alpha}\|u(t_{j-1})\|_{B^\infty L^2(\Om;\dot H^{\alpha}(\O))}\big) \d s + CM^{-1}  ,
\end{align}
{ where we have used the inverse inequalities in \eqref{inverse2}--\eqref{inverse3} for $\alpha\in(0,1)$ and the stability of $P_M$ on the Besov space. The stochastic Besov norm $\|u(t_{j-1})\|_{B^\infty L^2(\Om;\dot H^{\alpha}(\O))}$ in the last inequality can be furthermore estimated by the regularity result in \eqref{B-Inf-estimate}. In the case $\alpha=1$, for trace class noise, we can simply replace the stochastic Besov norm $\|P_Mu(t_{j-1})\|_{B^\infty L^2(\Om;\dot H^{\alpha}(\O))}$ by the classical Sobolev norm $\|P_Mu(t_{j-1})\|_{L^2(\Om;\dot H^{\alpha}(\O))}$ and use the regularity result in Remark \ref{RemarkProP}. In both cases, we can furthermore obtain the following result:}
\begin{align}\label{E2n-estimate-2}
\|\mathcal{E}_2^n\|_{L^2(\Om;L^2(\O))} 
&\le
C\sum_{j=2}^n \tau_j \|U^{j-1}_M-P_Mu(t_{j-1})\|_{L^2(\Om;L^2(\O))}
+ C\sum_{j=2}^n \tau_j t_{j-1}^{-\frac{\alpha-\beta}{2}} M^{-\alpha} + {  CM^{-1}} \notag\\
&\le
C\sum_{j=2}^n \tau_j \|U^{j-1}_M-u(t_{j-1})\|_{L^2(\Om;L^2(\O))}+CM^{-\alpha} .
\end{align}

The third term on the right-hand side of \eqref{error-repr} can be estimated directly by using the Lipschitz continuity of $f$, i.e., 
\begin{align}\label{estimate-E3}
\|\mathcal{E}_3^n\|_{L^2(\Om;L^2(\O))}
&\le
C\sum_{j=2}^n\tau_j \|U_M^{j-1}-u(t_{j-1})\|_{L^2(\Om;L^2(\O))}.
\end{align}
The estimation for the fourth and fifth terms on the right-hand side of \eqref{error-repr} are the technical parts in the proof, and are presented in the following two subsections, respectively.

\subsection{Estimation of $\mathcal{E}_4^n$}
\label{section:defect}

\begin{lemma}\label{Lemma:E4n}
\it 
Under {Assumption \ref{thm-assumption}},
the remainder ${\mathcal{E}}^n_{4}$ in \eqref{error-repr} has the following bound: 
\begin{align}\label{temporal-2}
\|{\mathcal{E}}^n_{4} \|_{L^2(\Omega;L^2(\mathcal{O}))}
\le C\tau^{\alpha} .
\end{align} 
\end{lemma}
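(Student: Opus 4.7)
The plan is to apply It\^o's formula to the process $r\mapsto f(u(r))$ and substitute the resulting expansion into the definition of $\mathcal{E}_4^n$. Since $u$ satisfies the mild equation \eqref{mild-solution} with drift $-Au+f(u)$ and additive noise $W$, one obtains
\begin{equation*}
f(u(s))-f(u(t_{j-1}))=\int_{t_{j-1}}^{s}\!f'(u(r))[-Au(r)+f(u(r))]\d r+\int_{t_{j-1}}^{s}\!f'(u(r))\d W(r)+\tfrac12\!\int_{t_{j-1}}^{s}\!\operatorname{Tr}[f''(u(r))Q]\d r.
\end{equation*}
After substitution and (stochastic) Fubini, the estimate splits into four contributions that should each be controlled by a combination of semigroup smoothing, It\^o's isometry, the real interpolation result of Lemma \ref{real-interpolation2}, and the regularity bounds of Proposition \ref{Regularity}.

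For the three easier contributions: the drift piece $f'(u)f(u)$ is controlled by the Lipschitz bound on $f$ together with the a priori estimate \eqref{u-L2-estimate}, giving a net $O(\tau)$ bound; the stochastic-integral piece is treated by stochastic Fubini combined with It\^o's isometry and the noise regularity \eqref{t-alpha}, giving $O(\tau^{\alpha})$; and the trace correction $\operatorname{Tr}[f''(u(r))Q]$ --- which is formally infinite for cylindrical noise --- is handled using Assumption \ref{thm-assumption}(2), which allows one to move the factor $A^{-\eta/2}$ onto the semigroup $e^{-(t_n-s)A}$ (producing an integrable $(t_n-s)^{-\eta/2}$), while the remaining sum over eigenmodes is absorbed by the stochastic Besov bound \eqref{mu-C} combined with Lemma \ref{real-interpolation2}.

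The main obstacle is the contribution from $f'(u(r))Au(r)$, because for rough noise and rough initial data $Au(r)$ only lies in a space of negative Sobolev regularity. I plan to redistribute the $A$ onto the semigroup and the multiplication operator via a splitting of the schematic form
\begin{equation*}
e^{-(t_n-s)A}f'(u(r))Au(r)=A^{1-\alpha/2}e^{-(t_n-s)A}\cdot\bigl[A^{-1+\alpha/2}f'(u(r))A^{1-\alpha/2}\bigr]\cdot A^{\alpha/2}u(r),
\end{equation*}
in which the semigroup factor contributes $(t_n-s)^{-(1-\alpha/2)}$, the middle conjugated multiplication operator is bounded via a commutator expansion that reduces it (using Assumption \ref{thm-assumption}(2)) to the boundedness of $f'$ modulo a lower-order remainder, and the last factor is controlled by the stochastic Besov regularity $\|u(r)\|_{B^\infty L^2(\Omega;\dot{H}^\alpha(\mathcal{O}))}\le Cr^{-(\alpha-\beta)/2}$ from Proposition \ref{Regularity}. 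The passage between the stochastic Besov norm and $L^2(\Omega;\dot{H}^\alpha(\mathcal{O}))$ after composition with the smoothing semigroup is justified by Lemma \ref{real-interpolation2}.

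Assembling the four pieces, integrating each in $s$ over $[t_{j-1},t_j]$ and summing in $j=2,\dots,n$ with the locally refined stepsizes $\tau_j\sim t_j^\gamma\tau$, the resulting weights combine into sums of the form $\sum_{j}\tau_j\,t_{j-1}^{-(\alpha-\beta)/2}(t_n-t_j)^{-(1-\alpha/2)}$; under the condition \eqref{gamma-condition1} on $\gamma$, these sums are bounded uniformly in $n$, thereby resolving the singularity at $t=0$ and yielding the required $O(\tau^\alpha)$ bound.
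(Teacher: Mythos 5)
Your proposal takes a genuinely different route from the paper --- It\^o's formula applied to $r\mapsto f(u(r))$ rather than the paper's splitting via the intermediate term $f(e^{-(s-t_{j-1})A}u(t_{j-1}))$ --- but as written it has two gaps that are fatal precisely in the main case of interest (one-dimensional space-time white noise, $\alpha=\tfrac12$). First, the It\^o expansion you write down presupposes a strong solution: the term $f'(u(r))Au(r)$ is a product of a function with only $\dot H^{\alpha-}$ spatial regularity and a distribution in $\dot H^{\alpha-2}$; for $\alpha=\tfrac12$ the regularities sum to $-1<0$, so the product is not defined. Your proposed repair, bounding the conjugated multiplier $A^{-1+\alpha/2}f'(u(r))A^{1-\alpha/2}$ on $L^2$ by a ``commutator expansion,'' has no support from Assumption 2.1(2) (which only controls $A^{-\eta/2}[f''(u)vw]$ for $v,w\in L^2$) and is false for a multiplier of regularity $H^{1/2-}$ conjugated past $A^{3/4}$. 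Worse, the remaining factor is the classical Sobolev norm $\|A^{\alpha/2}u(r)\|_{L^2(\Omega;L^2)}=\|u(r)\|_{L^2(\Omega;\dot H^\alpha)}$, which is exactly the quantity that is \emph{infinite} for white noise at the critical $\alpha$; the embedding $B^\infty L^2(\Omega;\dot H^\alpha)\hookrightarrow L^2(\Omega;\dot H^\alpha)$ does not hold, and Lemma \ref{real-interpolation2} cannot convert one into the other unless a smoothing semigroup acts directly on $u(r)$ first (which your splitting does not provide). Second, the It\^o correction $\tfrac12\operatorname{Tr}[f''(u)Q]=\tfrac12\sum_k\mu_k f''(u)\phi_k^2$ genuinely diverges for $Q=I$: Assumption 2.1(2) only yields $\|A^{-\eta/2}\operatorname{Tr}[f''(u)Q]\|_{L^2}\le C\sum_k\mu_k$, and \eqref{mu-C} controls dyadic blocks of the stochastic convolution weighted by $\lambda_k^{\alpha-1}$ --- it supplies no such weight here and cannot absorb this sum.

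The paper avoids both obstructions by never differentiating $f\circ u$ along the noise. It compares $u(s)$ with the free evolution $e^{-(s-t_{j-1})A}u(t_{j-1})$: the resulting increment is $O(\tau^{\alpha/2})$ in $L^4(\Omega;L^2(\O))$, so the second-order Taylor remainder $f''(\xi)|{\cdot}|^2$ is $O(\tau^{\alpha})$ with no trace correction at all; and the deterministic difference $f(e^{-\delta A}v)-f(v)$ is rewritten so that only \emph{first} spatial derivatives of $e^{-\delta A}u(t_{j-1})$ appear (a divergence term handled by $\|A^{-1/2}\nabla\cdot\|\le C$ plus a square-gradient term handled by Assumption 2.1(2)), and these are bounded by $\delta^{(\alpha-1)/2}t_{j-1}^{-(\alpha-\beta)/2}$ through interpolation of the two semigroup estimates against the Besov norm --- this is where Lemma \ref{real-interpolation2} is actually used. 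A minor additional issue: even granting your estimates, the sum $\sum_j\tau_j t_{j-1}^{-(\alpha-\beta)/2}(t_n-t_j)^{-(1-\alpha/2)}$ you end with is $O(1)$, not $O(\tau^\alpha)$; the factor $\tau^\alpha$ must be produced by the inner $\delta$-integral (in the paper, $\int_0^{s-t_{j-1}}\delta^{(\alpha-1)/2}\,\d\delta\sim\tau_j^{(\alpha+1)/2}$), and your bookkeeping does not exhibit it.
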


\begin{proof}
The estimation of $\mathcal{E}_4^n$ is by introducing an intermediate term $f(e^{-(s-t_{j-1})A}u(t_{j-1}))$ between $f(u(s))$ and $f(u(t_{j-1})$. 
By this means, we decompose $\mathcal{E}_4^n$ into the following two parts: 
\begin{align}\label{E4n-2-parts}
\mathcal{E}_4^n 
=&\
 \sum_{j=2}^n\int_{t_{j-1}}^{t_j} e^{-(t_n-s)A}[f(u(s))-f(u(t_{j-1})]\d s \notag\\
=&\
\sum_{j=2}^n\int_{t_{j-1}}^{t_j} e^{-(t_n-s)A}[f(e^{-(s-t_{j-1})A}u(t_{j-1}))-f(u(t_{j-1})]\d s
\notag\\
&\ +
{\sum_{j=1}^n}\int_{t_{j-1}}^{t_j} e^{-(t_n-s)A}[f(u(s))-f(e^{-(s-t_{j-1})A}u(t_{j-1}))]\d s \notag\\
=&\!:  
\mathcal{E}_{4,1}^n+\mathcal{E}_{4,2}^n.
\end{align}
The two parts are estimated separately.

Since $A=-\Delta$, the first part on the right-hand side of \eqref{E4n-2-parts} can be furthermore written as 
\begin{align*}
\mathcal{E}_{4,1}^n
&=
{-\sum_{j=2}^n\int_{t_{j-1}}^{t_j} e^{-(t_n-s)A}\int_{0}^{s-t_{j-1}}f'(e^{-\delta A}u(t_{j-1}))\Delta(e^{-\delta A}u(t_{j-1}))\d\delta\d s}\\
&=
-\sum_{j=2}^n\int_{t_{j-1}}^{t_j}e^{-(t_n-s)A}\int_{0}^{s-t_{j-1}} \nabla\cdot\Big(f'(e^{-\delta A}u(t_{j-1}))\nabla\big(e^{-\delta A}u(t_{j-1})\big)\Big)\d\delta\d s\\
&\quad +
\sum_{j=2}^n\int_{t_{j-1}}^{t_j}e^{-(t_n-s)A} \int_{0}^{s-t_{j-1}} f''(e^{-\delta A}u(t_{j-1}))\Big|\nabla\big(e^{-\delta A}u(t_{j-1})\big)\Big|^2\d\delta\d s.
\end{align*}
Since $\dot{H}^1(\O)=H_0^1(\O)$, it follows that $\dot{H}^{-1}(\O)=H^{1}_0(\O)'$. As a result, the following result holds for all $\vec h\in {L^2(\O)^d}$: 
\begin{align*}
\|A^{-\frac12}\nabla\cdot \vec{h}\|_{L^2(\O)}
=
\|\nabla\cdot \vec{h}\|_{H^{-1}(\O)}
=
\sup_{g\in H_0^1(\O)} \frac{|(\nabla\cdot \vec{h},g)|}{\|g\|_{H_0^1(\O)}}
&=
\sup_{g\in H_0^1(\O)} \frac{|(\vec{h},\nabla g)|}{\|g\|_{H_0^1(\O)}} 
\le
C\|\vec{h}\|_{L^2(\O)} . 
\end{align*}
By using this result and 
item (2) in Assumption \ref{thm-assumption},
we have 
\begin{align*}
&\|\mathcal{E}_{4,1}^n\|_{L^2(\Omega;L^2(\mathcal{O}))}\\
&\le
C\sum_{j=2}^n\int_{t_{j-1}}^{t_j} \hspace{-3pt} \int_{0}^{s-t_{j-1}} \hspace{-3pt} \Big\|A^{\frac12}e^{-(t_n-s)A}A^{-\frac12}\nabla\cdot\Big(f'(e^{-\delta A}u(t_{j-1}))\nabla\big(e^{-\delta A}u(t_{j-1})\big)\Big)\Big\|_{L^2(\Omega;L^2(\mathcal{O}))} \hspace{-23pt} \d\delta\d s\\
&\quad +
C\sum_{j=2}^n\int_{t_{j-1}}^{t_j}\int_{0}^{s-t_{j-1}} \Big\|A^{\frac{\eta}{2}}e^{-(t_n-s)A}A^{-\frac{\eta}{2}}f''(e^{-\delta A}u(t_{j-1}))\big|\nabla\big(e^{-\delta A}u(t_{j-1})\big)\big|^2\Big\|_{L^2(\Omega;L^2(\mathcal{O}))} \hspace{-23pt} \d\delta\d s\\
&\le
C\sum_{j=2}^n\int_{t_{j-1}}^{t_j}\int_{0}^{s-t_{j-1}} (t_n-s)^{-\frac12}\Big\|f'(e^{-\delta A}u(t_{j-1}))\nabla\big(e^{-\delta A}u(t_{j-1})\big)\Big\|_{L^2(\Om;L^2(\O))}\d \delta\d s\\
&\quad +
C\sum_{j=2}^n\int_{t_{j-1}}^{t_j}\int_0^{s-t_{j-1}}(t_n-s)^{-\frac{\eta}{2}}\big\|\nabla\big(e^{-\delta A}u(t_{j-1})\big)\big\|_{L^4(\Om;L^2(\O))}^2\d\delta \d s.
\end{align*}
The analyticity of the semigoup $e^{-t A}$ implies the following estimates for all $1\le p\le\infty$: 
\begin{align*}
\|e^{-\delta A}u(t)\|_{L^p(\Om;\dot{H}^1(\O))}
&\le 
C\delta^{-\frac12}\|u(t)\|_{L^p(\Om;L^2(\O))},\\
\|e^{-\delta A}u(t)\|_{L^p(\Om;\dot{H}^1(\O))}
&\le 
C\|u(t)\|_{L^p(\Om;\dot{H}^1(\O))}.
\end{align*}
By using Lemma \ref{Operator-Inter}, Lemma \ref{real-interpolation2} and the result \eqref{B-Inf-estimate} in Proposition \ref{Regularity}, we obtain
\begin{align*}
\|e^{-\delta A}u(t)\|_{L^p(\Om;\dot{H}^1(\O))}
&\le
C\delta^{\frac{\alpha-1}{2}}\|u(t)\|_{(L^p(\Om;L^2(\O)),L^p(\Om;\dot{H}^1(\O)))_{\alpha,\infty}}\\
&\le
C\delta^{\frac{\alpha-1}{2}}\|u(t)\|_{B^{\infty}L^p(\Om;\dot{H}^{\alpha}(\O))}\\
&\le
C\delta^{\frac{\alpha-1}{2}}t^{-\frac{\alpha-\beta}{2}}\|u^0\|_{L^p(\Om;\dot{H}^{\beta}(\O))}
\end{align*}
for $p\in\{2,4\}$ and  $\beta\in(-1,\alpha]$.
Therefore, by using the inequality above, we have  
\begin{align}\label{estimate-E41}
\|\mathcal{E}_{4,1}^n\|_{L^2(\Omega;L^2(\mathcal{O}))} 
&\le
C\sum_{j=2}^n\int_{t_{j-1}}^{t_j}\int_{0}^{s-t_{j-1}} (t_n-s)^{-\frac12}\delta^{\frac{\alpha-1}{2}}t_{j-1}^{-\frac{\alpha-\beta}{2}}\d \delta\d s\|u^0\|_{ L^2(\Omega;\dot{H}^{\beta}(\O))} \notag\\
&\quad +
C\sum_{j=2}^n\int_{t_{j-1}}^{t_j}\int_0^{s-t_{j-1}}(t_n-s)^{-\frac{\eta}{2}}\delta^{\alpha-1}t_{j-1}^{-(\alpha-\beta)}\d\delta \d s\|u^0\|^2_{ L^4(\Omega;\dot{H}^{\beta}(\O))} \notag\\
&\le
C\sum_{j=2}^n\int_{t_{j-1}}^{t_j}\Big( (t_n-s)^{-\frac12}\tau_j^{\frac{\alpha+1}{2}}t_{j-1}^{-\frac{\alpha-\beta}{2}}
+(t_n-s)^{-\frac{\eta}{2}}\tau_j^{\alpha}t_{j-1}^{-(\alpha-\beta)}\Big) \d s \notag\\
&\le
C\sum_{j=2}^n\int_{t_{j-1}}^{t_j}
\Big( (t_n-s)^{-\frac12}\tau^{\frac{\alpha+1}{2}}s^{\frac{\gamma(\alpha+1)-\alpha+\beta}{2}}
+(t_n-s)^{-\frac{\eta}{2}}\tau^{\alpha} s^{\alpha\gamma-\alpha+\beta}\Big) \d s ,
\end{align}
where the second to last inequality has used $\tau_j\sim t_j^{\gamma}\tau$.
Since condition \eqref{gamma-condition1} implies  
$$
\frac{\gamma(\alpha+1)-\alpha+\beta}{2} > -1 
\quad\mbox{and}\quad
\alpha\gamma-\alpha+\beta>-1 
$$
it follows that 
\begin{align}\label{E41-estimate}
\|\mathcal{E}_{4,1}^n\|_{L^2(\Omega;L^2(\mathcal{O}))}
\le C\tau^{\alpha} .
\end{align}

The second part on the right-hand side of \eqref{E4n-2-parts} can be further decomposed into three parts by using Taylor's expansion, i.e.,  
\begin{align}\label{def-En=star}
\mathcal{E}_{4,2}^n
&=
\sum_{i=1}^n\int_{t_{i-1}}^{t_i}e^{-(t_n-s)A}f'(e^{-(s-t_{i-1})A}u(t_{i-1}))\big(u(s)-e^{-(s-t_{i-1})A}u(t_{i-1})\big)\d s \notag\\
&\quad + \sum_{i=1}^n\int_{t_{i-1}}^{t_i} e^{-(t_n-s)A}f''(\xi_s)\big|u(s)-e^{-(s-t_{i-1})A}u(t_{i-1})\big|^2\d s\notag\\
&=
\sum_{i=1}^n\int_{t_{i-1}}^{t_i}e^{-(t_n-s)A}f'(e^{-(s-t_{i-1})A}u(t_{i-1})) \int_{t_{i-1}}^se^{-(s-\delta)A}f(u(\delta))\d\delta \d s \notag\\
&\quad +
\sum_{i=1}^n\int_{t_{i-1}}^{t_i}e^{-(t_n-s)A}f'(e^{-(s-t_{i-1})A}u(t_{i-1})) \int_{t_{i-1}}^s e^{-(s-\delta)A}\d W(\delta) \d s \notag\\
&\quad +
 \sum_{i=1}^n\int_{t_{i-1}}^{t_i} A^{\frac{\eta}{2}}e^{-(t_n-s)A}A^{-\frac{\eta}{2}}
 \Big( f''(\xi_s)\big|u(s)-e^{-(s-t_{i-1})A}u(t_{i-1})\big|^2 \Big) \d s \notag\\
&=
{\mathcal{E}}^n_{*,1} + {\mathcal{E}}^n_{*,2} + {\mathcal{E}}^n_{*,3} . 
\end{align}
The first part on the right-hand side of \eqref{def-En=star} can be estimated directly by using the regularity results in Proposition \ref{Regularity}, i.e., 
\begin{align*}
\|{\mathcal{E}}^n_{*,1} \|_{L^2(\Omega;L^2(\mathcal{O}))}
&\le 
C\sum_{i=1}^n\int_{t_{i-1}}^{t_i}\int_{t_{i-1}}^s \|f(u(\delta))\|_{L^2(\Omega;L^2(\mathcal{O}))} \d\delta \d s\\
&\le 
C\sum_{i=1}^n\int_{t_{i-1}}^{t_i}\int_{t_{i-1}}^s(1+\|u(\delta)\|_{L^2(\Omega;L^2(\mathcal{O}))})\d\delta \d s\\
&\le
C\sum_{i=1}^n\int_{t_{i-1}}^{t_i}\int_{t_{i-1}}^s (1+\delta^{\frac{\beta}{2}})\d\delta\d s\\
&\le
C\sum_{i=1}^n\Big(\tau_i+t_i^{1+\frac{\beta}{2}}-t_{i-1}^{1+\frac{\beta}{2}}\Big)\tau +C\tau\\
&\le
C\tau .
\end{align*}
The second part on the right-hand side of \eqref{def-En=star} can be estimated by using 
{It\^o's isometry}
and
item (3) in Assumption \ref{thm-assumption},
as shown in \eqref{noise-equiv} (see Appendix), i.e., 
\begin{align*}
&\|{\mathcal{E}}^n_{*,2} \|_{L^2(\Omega;L^2(\mathcal{O}))}^2 \\
&\le C 
\sum_{i=1}^n\Big\|\int_{t_{i-1}}^{t_i}e^{-(t_n-s)A}f'(e^{-(s-t_{i-1})A}u(t_{i-1}))\int_{t_{i-1}}^s e^{-(s-\delta)A}\d W(\delta)\d s\Big\|_{L^2(\Omega;L^2(\mathcal{O}))}^2\\
&\le C 
\sum_{i=1}^n\Big(\int_{t_{i-1}}^{t_i}\Big\|\int_{t_{i-1}}^s e^{-(s-\delta)A}\d W(\delta)\Big\|_{L^2(\Omega;L^2(\mathcal{O}))}\d s\Big)^2\\
&\le C 
\sum_{i=1}^n\Big(\int_{t_{i-1}}^{t_i}(s-t_{i-1})^{\frac{\alpha}{2}}\d s\Big)^2
\quad\mbox{(here \eqref{noise-equiv} in Appendix is used)} \\
&\le C
\tau^{1+\alpha} . 
\end{align*}
This proves the following result: 
$$
\|{\mathcal{E}}^n_{*,2} \|_{L^2(\Omega;L^2(\mathcal{O}))}\le C\tau^{\frac{1+\alpha}{2}}.
$$

The third part on the right-hand side of \eqref{def-En=star}  can be estimated by using the following identity for $s\in[t_{i-1},t_i]$: 
\begin{align*}
u(s) = e^{-(s-t_{i-1})A}u(t_{i-1}) 
+\int_{t_{i-1}}^{s}e^{-(s-t)A}f(u(t))\d t +\int_{t_{i-1}}^{s}e^{-(s-t)A}\d W(t)  ,
\end{align*}
which implies that  
\begin{align*}
&\|u(s)-e^{-(s-t_{i-1})A}u(t_{i-1})\|_{L^4(\Omega;L^2(\mathcal{O}))}\\
&\le
\Big\|\int_{t_{i-1}}^se^{-(s-t)A}f(u(t))\d t\Big\|_{L^4(\Omega;L^2(\mathcal{O}))}
+\Big\|\int_{t_{i-1}}^se^{-(s-t)A}\d W(t)\Big\|_{L^4(\Omega;L^2(\mathcal{O}))}\\
&\le
C\int_{t_{i-1}}^s{\color{black}(1+\|u(t)\|_{L^4(\Omega;L^2(\mathcal{O}))})}\d t
+C\Big\|\int_{0}^{s-t_{i-1}}e^{-(s-t_{i-1}-t)A}\d W(t)\Big\|_{L^4(\Omega;L^2(\mathcal{O}))}
\\
&\le
C\int_{t_{i-1}}^s (1+ t^{\frac{\beta}{2}}) \d t
+
C\Big\|\int_{0}^{s-t_{i-1}}e^{-(s-t_{i-1}-t)A}\d W(t)\Big\|_{L^4(\Omega;L^2(\mathcal{O}))}
\\
&\le
C\tau^{\frac{\alpha}{2}} , 
\end{align*}
where the second to last inequality uses the regularity results in Proposition \ref{Regularity}. 
Then, by using 
item (2) in Assumption \ref{thm-assumption} 
on the nonlinearity, we have 
\begin{align*}
&\|{\mathcal{E}}^n_{*,3} \|_{L^2(\Omega;L^2(\mathcal{O}))}\\
&\le
{\small
C\sum_{i=1}^n\int_{t_{i-1}}^{t_i}(t_n-s)^{-\frac{\eta}{2}} \Big\|A^{-\frac{\eta}{2}}\Big( f''(\xi_s)\big|u(s)-e^{-(s-t_{i-1})A}u(t_{i-1})\big|^2 \Big) \Big\|_{L^2(\Omega;L^2(\mathcal{O}))}\d s}\\
&\le
C\sum_{i=1}^n\int_{t_{i-1}}^{t_i}(t_n-s)^{-\frac{\eta}{2}}\|u(s)-e^{-(s-t_{i-1})A}u(t_{i-1})\|_{L^4(\Omega;L^2(\mathcal{O}))}^2\d s\\
&\le
C\sum_{i=1}^n\Big((t_n-t_{i-1})^{1-\frac{\eta}{2}}-(t_n-t_{i})^{1-\frac{\eta}{2}}\Big)\tau^{\alpha}\\
&\le
C\tau^{\alpha} .
\end{align*}
By substituting the estimates of $\|{\mathcal{E}}^n_{*,j}\|_{L^2(\Omega;L^2(\mathcal{O}))}$, $j=1,2,3,$ into \eqref{def-En=star}, we obtain
\begin{align}\label{E42-estimate}
\|{\mathcal{E}}^n_{4,2} \|_{L^2(\Omega;L^2(\mathcal{O}))}
\le C\tau^{\alpha} .
\end{align} 
Estimates \eqref{E41-estimate} and \eqref{E42-estimate} imply the desired result of 
Lemma \ref{Lemma:E4n}. 
\end{proof}


\subsection{Estimation of $\mathcal{E}_5^n$}\label{section:noise errors}

The last term on the right-hand side of \eqref{error-repr} can be estimated as follows, by considering two different cases and using the real interpolation method between two Besov spaces. 

Case 1: $t_n>M^{-2}$.
On the one hand, for any $\delta\in(0,\alpha)$ the following estimate holds: 
\begin{align*}
\|\Pi_j\mathcal{E}_5^n\|_{L^2(\Om;\dot{H}^{\delta}(\O))}^2
&=
\|A^{\frac{\delta}{2}}\Pi_j\mathcal{E}_5^n\|_{L^2(\Om;L^2(\O))}^2\\
&=
\int_{t_1}^{t_n} \|A^{\frac{\delta}{2}}\Pi_j(e^{-(t_n-s)A}-e^{-(t_n-s)A}P_M)\|_{L^2(\Om;\mathcal{L}_2^0)}^2\d s\\
&\le
C\int_0^{t_n-M^{-2}}\sum_{k=2^{j-1}}^{2^j-1}\mu_kM^{-2\alpha}(t_n-s)^{-(\alpha+\delta+1-\alpha)}\|\phi_k\|_{\dot{H}^{-(1-\alpha)}}^2\d s\\
&\quad +
C\int_{t_n-M^{-2}}^{t_n}\sum_{k=2^{j-1}}^{2^j-1}\mu_k(t_n-s)^{-(\delta+1-\alpha)}\|\phi_k\|_{\dot{H}^{-(1-\alpha)}}^2\d s\\
&\le
CM^{-2\alpha}\int_0^{t_n-M^{-2}}(t_n-s)^{-(\delta+1)}\d s\sum_{k=2^{j-1}}^{2^j-1}\mu_k\lambda_k^{\alpha-1} \\
&\quad +
C\int_{t_n-M^{-2}}^{t_n}(t_n-s)^{-(\delta+1-\alpha)}\d s\sum_{k=2^{j-1}}^{2^j-1}\mu_k\lambda_k^{\alpha-1}\\
&\le
C\Big(M^{-2\alpha}(M^{2\delta}-t_n^{-\delta})  +
CM^{-2\alpha+2\delta}\Big)
\sum_{k=2^{j-1}}^{2^j-1}\mu_k\lambda_k^{\alpha-1},
\end{align*}
where 
$$
{\sum_{k=2^{j-1}}^{2^j-1}\mu_k\lambda_k^{\alpha-1}
\le
C\Big\| \int_0^Te^{-(T-s)A}\d W(s) \Big\|_{B^\infty L^2(\Omega;\dot H^\alpha(\mathcal{O}))} }\le C.
$$
As a consequence,
$$
\|\Pi_j\mathcal{E}_5^n\|_{L^2(\Om;\dot{H}^{\delta}(\O))}^2
\le
CM^{-2\alpha+2\delta}.
$$
On the other hand, by choosing a constant $\delta_0$ such that $\delta_0-\delta>0$, we have 
\begin{align*}
\|\Pi_j\mathcal{E}_5^n\|_{L^2(\Om;\dot{H}^{-\delta}(\O))}^2
&=
\int_0^{t_n} \sum_{k=2^{j-1}}^{2^j-1}\mu_k\|A^{-\frac{\delta}{2}}(e^{-(t_n-s)A}-e^{-(t_n-s)A}P_M)\phi_k\|_{L^2(\Om;L^2(\O))}^2\d s\\
&\le 
CM^{-2\alpha-2\delta_0}\int_0^{t_n-M^{-2}}(t_n-s)^{-(\alpha+\delta_0-\delta+1-\alpha)}\d s\sum_{k=2^{j-1}}^{2^j-1}\mu_k\lambda_k^{\alpha-1}\\
&\quad +
{CM^{-2\delta_0}\int_{t_n-M^{-2}}^{t_n}(t_n-s)^{-(\delta_0-\delta+1-\alpha)}\d s\sum_{k=2^{j-1}}^{2^j-1}\mu_k\lambda_k^{\alpha-1}}\\
&\le
C\Big(\! M^{-2\alpha-2\delta_0} (M^{2(\delta_0-\delta)} \!\! - \!  t_n^{-(\delta_0-\delta)})/(\delta_0-\delta)
\!+\!
M^{-2\alpha-2\delta}
\Big) \hspace{-5pt}
\sum_{k=2^{j-1}}^{2^j-1}\mu_k\lambda_k^{\alpha-1}\\
&\le
CM^{-2\alpha-2\delta} .
\end{align*}

Case 2: $t_n\le M^{-2}$. In this case, the integral from $M^{-2}$ to $t_n$ vanishes in the estimates above. 
The integral from $0$ to $M^{-2}$ can be estimated similarly. 

Overall, in both cases, the following estimate holds for all $\delta\in(0,\alpha)$ and and $t_n\in [t_1,T]$:  
\begin{align*}
\|\Pi_j\mathcal{E}_5^n\|_{B^{\infty}L^2(\Om;\dot{H}^{\pm\delta}(\O))}
\le
CM^{-\alpha\pm\delta} .
\end{align*}
By using the real interpolation method and the results in Lemma \ref{real-interpolation2}, we obtain  
\begin{align}\label{estimate-E5}
\|\mathcal{E}_5^n\|_{L^2(\Om;L^2(\O))}
=
\|\mathcal{E}_5^n\|_{(B^{\infty}L^2(\Om;\dot{H}^{-\delta}(\O)),B^{\infty}L^2(\Om;\dot{H}^{\delta}(\O)))_{\frac12,2}}
\le
CM^{-\alpha} .
\end{align}

\subsection{Completion of the proof}\label{section:completion}
To conclude, by substituting the estimates of $\mathcal{E}_k$, $k=1,2,\cdots,5$, into \eqref{error-repr}, 
we obtain that 
\begin{align}\label{final-error1}
&\|U_M^n-u(t_n)\|_{L^2(\Omega;L^2(\mathcal{O}))} \notag\\
&\le
CM^{-\alpha}t_n^{-\frac{\alpha-\beta}{2}}\|u^0\|_{ L^2(\Omega;\dot{H}^{\beta}(\mathcal{O}))}
+\sum_{j=2}^n\tau_j\|U_M^{j-1}-u(t_{j-1})\|_{L^2(\Om;L^2(\O))}
+C\tau^{\alpha} \notag\\
&\le
CM^{-\alpha}t_n^{-\frac{\alpha-\beta}{2}}\|u^0\|_{ L^2(\Omega;\dot{H}^{\beta}(\mathcal{O}))}
+\sum_{j=1}^{n-1}\tau_j\|U_M^{j}-u(t_{j})\|_{L^2(\Om;L^2(\O))} +C\tau^{\alpha} 
\quad\mbox{for}\,\,\, n\ge 2 .
\end{align}

By comparing the first relation of \eqref{FD-method-2} with \eqref{mild-solution}, we obtain that 
\begin{align*}
&\hspace{-10pt}\tau_1\|U_M^{1}-u(t_{1})\|_{L^2(\Om;L^2(\O))} \notag\\ 
\le&\ 
\tau_1\| e^{-\tau_1A}U_M^0 - e^{-\tau_1A}u^0 \|_{L^2(\Om;L^2(\O))} \notag\\
&\ + \tau_1\Big\| \int_0^{\tau_1} e^{-(\tau_1-s)A}f(u(s))\d s+\int_0^{\tau_1} e^{-(\tau_1-s)A}\d W(s) \Big\|_{L^2(\Om;L^2(\O))} \notag\\
\le&\ 
C{(\tau_1^{1+\frac{\beta}{2}}+\tau_1)}\| u^0 \|_{ L^2(\Omega;\dot H^{\beta}(\O))}
+ \tau_1\Big\| \int_0^{\tau_1} (C+Cs^{\frac{\beta}{2}}) \d s\Big\|_{L^2(\Om;L^2(\O))} 
+C\tau_1^{1+\frac{\alpha}{2}} \notag\\
\le&\ 
C\tau_1^{1+\frac{\beta}{2}} +{C\tau_1}+ C\tau_1^{2+\frac{\beta}{2}} + C\tau_1^{1+\frac{\alpha}{2}} 
\le 
C\tau^{\frac{1}{1-\gamma}(1+\frac{\beta}{2})}{+C\tau^{\frac{1}{1-\gamma}}}  ,
\end{align*}
where we have used the property $\tau_1=O(\tau^{\frac{1}{1-\gamma}})$. 
Since condition \eqref{gamma-condition1} implies $\frac{1}{1-\gamma}(1+\frac{\beta}{2})\ge \alpha$ and $\frac{1}{1-\gamma}>2>\alpha$,
it follows that 
\begin{align*}
\tau_1\|U_M^{1}-u(t_{1})\|_{L^2(\Om;L^2(\O))} 
\le 
C\tau^{\alpha}  .
\end{align*}
Substituting this into \eqref{final-error1} yields
\begin{align}\label{final-error}
&\|U_M^n-u(t_n)\|_{L^2(\Omega;L^2(\mathcal{O}))} \notag\\
&\le
CM^{-\alpha}t_n^{-\frac{\alpha-\beta}{2}} 
+\sum_{j=2}^{n-1}\tau_j\|U_M^{j}-u(t_{j})\|_{L^2(\Om;L^2(\O))} +C\tau^{\alpha} 
\quad\mbox{for}\,\,\, n\ge 2 . 
\end{align}
Then, by applying the discrete Gronwall's inequality, we obtain  
\begin{align*}
\|U_M^n-u(t_n)\|
\le 
CM^{-\alpha}t_n^{-\frac{\alpha-\beta}{2}} +C\tau^{\alpha}
\quad\mbox{for}\,\,\, n\ge 2.
\end{align*}
This proves the result of Theorem \ref{main-theorem}. 
\endproof

%

\section{Numerical experiments}\label{numerical}
In this section, we present numerical results to support the theoretical analysis. 
All computations are performed by Matlab with double precision (see \cite{Higham-2021} for algorithmic implementation on Matlab for stochastic differential equations). 

Let $\O=[0,1]$ and $T=0.5$. 
We solve problem \eqref{SPDE1} by the proposed modified exponential Euler scheme with Fourier collocation method in \eqref{FD-method-2}, with the nonlinear drift function  
$$
f(u)=\sqrt{1+u^2},
$$
which satisfies items (1)--(2) in Assumption \ref{thm-assumption}.
The following two deterministic initial values are tested
\begin{align*}
u_1^0(x) = \sin(\pi x)
\quad\mbox{and} \quad 
u_2^0(x) = \delta\Big(x-\frac12\Big),
\end{align*} 
where $u_1^0\in H_0^1(\O)\cap C^{\infty}(\O)$ and $u_2^0\in\dot{H}^{-\frac12-\eps}(\O)$ is the Dirac delta function, where $\eps>0$ can be an arbitrary small number.

The implementation of the numerical method is simple, i.e., the nonlinear term can be calculated by 
$$
\int_{t_{n-1}}^{t_n}e^{-(t_n-s)A}I_Mf(U_M^{n-1})\d s
=
\left(\frac{1-e^{-\tau_nA}}{A}\right)I_Mf(U_M^{n-1}),
$$
where $I_M$ can be calculated by using FFT with $O(M\ln M)$ operations at every time level. 

The noise term can be calculated by
\begin{align*}
\int_{t_{n-1}}^{t_n} e^{-(t_n-s)A}P_M\d W(s)
&=
\sum_{k=1}^M \sqrt{\mu_k}\phi_k\int_{t_{n-1}}^{t_n}e^{-(t_n-s)\lambda_k}\d W_k(s)\\
&=
\sum_{k=1}^M\sqrt{\mu_k} \phi_k\left(\frac{1-e^{-2\tau_n\lambda_k}}{2\lambda_k}\right)^{\frac12}\xi_k^n 
\end{align*}
with independent and standard normally distributed random variables $\xi_k^n$ for $1\le k\le M$ and $1\le n\le N$.
If the noises $\xi_k^n $ are generated with a fine mesh in time, then the following identity can be used to calculate the numerical solution with a coarse mesh in time (with stepsize $t_{n+m}-t_n$): 
\begin{align*}
\int_{t_n}^{t_{n+m}}e^{-(t_{n+m}-s)\lambda_k}\d W_k(s)
&=
\sum_{j=1}^me^{-(t_{n+m}-t_{n+j})\lambda_k}\int_{t_{n+j-1}}^{t_{n+j}}e^{-(t_{n+j}-s)\lambda_k}\d W_k(s)\\
&=
\sum_{j=1}^me^{-(t_{n+m}-t_{n+j})\lambda_k} \left(\frac{1-e^{-2\tau_{n+j}\lambda_k}}{2\lambda_k}\right)^{\frac12}\xi_k^{n+j}
\quad\forall\, m\ge 1. 
\end{align*}
This allows us to test the errors and convergence orders by using a reference solution with a very fine mesh in time. 

%

To test the spatial convergence orders, we fix a sufficiently small time stepsize $\tau=2^{-10}$ and  calculate the error by 
$$
E_1(M) = \left(\frac{1}{I}\sum_{i=1}^I \|U_{\tau,M}^N(\omega_i)-U_{\tau,2M}^N(\omega_i)\|_{L^2(\O)}^2\right)^{\frac12}
$$
for $M = 16, 32, 64,128$, 
i.e., the expectations of errors over $I=1000$ samples at $t=T$,
and then present them in Tables \ref{space-table-u1}--\ref{space-table-u2}
 for different initial data. 
 
\begin{table}[htbp]
\caption{Spatial discretization error $E_1(M)$ with initial data $u^0_1$ and $\gamma = 0.7$}
\vspace{.03in}
\label{space-table-u1}
\centering
\setlength{\tabcolsep}{4mm}{
\begin{tabular}{cccccc}
      \toprule[1pt]
	$\mu_k{\Huge\backslash} M$ & 16 & 32& 64& 128&Order\\ 
        \midrule[0.7pt]
         $\mu_k \equiv 1$& 
3.866e-2&
2.786e-2&
1.980e-2&
1.402e-2&
        $\approx    0.50\,(0.50)$\\ 
                        	 \midrule
        $\mu_k = 1/k^{0.5}$& 
1.787e-2&
1.076e-2&
6.469e-3&
3.859e-3&
        $\approx   0.75\,(0.75)$\\ 
        	 \midrule
        $\mu_k = 1/k^{0.8}$& 
1.126e-2&
6.188e-3&
3.326e-3&
1.780e-3&
        $\approx   0.90\,(0.90)$\\ 
	 \midrule
        $\mu_k = 1/k$& 
8.243e-3&
4.238e-3&
2.126e-3&
1.071e-3&
        $\approx   0.99\,(1.00)$\\ 
        	 \midrule
        $\mu_k = 1/k^{1.1}$& 
7.156e-3&
3.506e-3&
1.710e-3&
8.311e-4&
        $\approx   1.04\,(1.00)$\\ 
         \bottomrule
\end{tabular}}
\end{table}

\begin{table}[htbp]
\caption{Spatial discretization error $E_1(M)$ with initial data $u^0_2$ and $\gamma = 0.7$}
\vspace{.03in}
\label{space-table-u2}
\centering
\setlength{\tabcolsep}{4mm}{
\begin{tabular}{cccccc}
      \toprule[1pt]
	$\mu_k{\Huge\backslash} M$ & 16 & 32& 64& 128&Order\\ 
        \midrule[0.7pt]
         $\mu_k \equiv 1$& 
3.858e-2&
2.776e-2&
1.971e-2&
1.403e-2&
        $\approx    0.49\,(0.50)$\\ 
                        	 \midrule
        $\mu_k = 1/k^{0.5}$& 
1.787e-2&
1.080e-2&
6.443e-3&
3.853e-3&
        $\approx   0.74\,(0.75)$\\ 
        	 \midrule
        $\mu_k = 1/k^{0.8}$& 
1.127e-2&
6.164e-3&
3.310e-3&
1.788e-3&
        $\approx   0.89\,(0.90)$\\ 
	 \midrule
        $\mu_k = 1/k$& 
8.261e-3&
4.205e-3&
2.139e-3&
1.072e-3&
        $\approx   1.00\,(1.00)$\\ 
        	 \midrule
        $\mu_k = 1/k^{1.1}$& 
7.130e-3&
3.511e-3&
1.705e-3&
8.293e-4&
        $\approx   1.04\,(1.00)$\\ 
         \bottomrule
\end{tabular}}
\end{table}

\bigskip

In the case $\mu_k=1/k^\delta$ {($0\le\delta<1$)}, the noise satisfies Assumption \ref{thm-assumption} (3) 
 with $\alpha=\frac{1+\delta}{2}$. This order of convergence in space is well illustrated by the numerical results in Tables \ref{space-table-u1}--\ref{space-table-u2}. 
From Tables \ref{space-table-u2} we see that the modified exponential Euler method with Fourier collocation method in space is robust with respect to the regularity of the initial data, including measure-valued functions such as the Diract delta function.



To test the temporal convergence orders, we choose $M=N$ and  calculate the error by 
$$
E_2(\tau) = \left(\frac{1}{I}\sum_{i=1}^I \|U_{\tau,N}^N(\omega_i)-U_{\tau/2, N_2}^{N_2}(\omega_i)\|_{L^2(\O)}^2\right)^{\frac12},
$$
where $N_2$ is the number of the time levels for time stepsize $\tau/2$.
By Theorem \ref{main-theorem}, the spatial convergence order equals the temporal convergence order. 
The numerical results are given in Tables \ref{time-table-u1}--\ref{time-table-u2}, 
where the observed temporal convergence orders are consistent with the theoretical result proved in Theorem \ref{main-theorem}. 
\begin{table}[htbp]
\caption{Temporal discretization error $E_2(\tau)$ with initial data $u_1^0$ and $\gamma = 0.7$ }
\vspace{.03in}
\label{time-table-u1}
\centering
\setlength{\tabcolsep}{4mm}{
\begin{tabular}{cccccc}
      \toprule[1pt]
	$\mu_k{\Huge\backslash} \tau$ &  1/16 &1/32&1/64&1/128&Order\\ 
        \midrule[0.7pt]
         $\mu_k \equiv1$& 
2.401e-2&
1.665e-2&
1.192e-2&
8.476e-3&
        $\approx 0.49\,(0.50)$\\ 
                       	 \midrule
        $\mu_k = 1/k^{0.5}$& 
8.661e-3&
5.055e-3&
3.040e-3&
1.824e-3&
        $\approx   0.74\,(0.75)$\\ 
        	 \midrule
        $\mu_k = 1/k^{0.8}$& 
4.739e-3&
2.509e-3&
1.362e-3&
7.315e-4&
        $\approx   0.90\,(0.90)$\\ 
	 \midrule
        $\mu_k = 1/k$& 
3.187e-3&
1.594e-3&
8.085e-4&
4.047e-4&
        $\approx   1.00\,(1.00)$\\ 
        	 \midrule
        $\mu_k = 1/k^{1.1}$& 
2.671e-3&
1.284e-3&
6.283e-4&
3.051e-4&
        $\approx   1.04\,(1.00)$\\ 
         \bottomrule
\end{tabular}}
\end{table}

\begin{table}[htbp]
\caption{Temporal discretization error $E_2(\tau)$ with initial data $u_2^0$ and $\gamma = 0.7$ }
\vspace{.03in}
\label{time-table-u2}
\centering
\setlength{\tabcolsep}{4mm}{
\begin{tabular}{cccccc}
      \toprule[1pt]
	$\mu_k{\Huge\backslash} \tau$ & 1/16 &1/32&1/64& 1/128&Order\\ 
        \midrule[0.7pt]
         $\mu_k \equiv1$& 
2.399e-2&
1.668e-2&
1.187e-2&
8.478e-3&
        $\approx 0.49\,(0.50)$\\ 
                       	 \midrule
        $\mu_k = 1/k^{0.5}$& 
8.659e-3&
5.056e-3&
3.054e-3&
1.824e-3&
        $\approx   0.74\,(0.75)$\\ 
        	 \midrule
        $\mu_k = 1/k^{0.8}$& 
4.728e-3&
2.501e-3&
1.361e-3&
7.319e-4&
        $\approx   0.90\,(0.90)$\\ 
	 \midrule
        $\mu_k = 1/k$& 
3.202e-3&
1.587e-3&
8.072e-4&
4.042e-4&
        $\approx   1.00\,(1.00)$\\ 
        	 \midrule
        $\mu_k = 1/k^{1.1}$& 
2.664e-3&
1.294e-3&
6.379e-4&
3.085e-4&
        $\approx   1.05\,(1.00)$\\ 
         \bottomrule
\end{tabular}}
\end{table}

\section{Conclusions}\label{conclusions}

We have considered a modified exponential Euler method for the semilinear stochastic heat equation, with Fourier Galerkin and Fourier collocation method in space. 
Some new techniques are introduced to the error analysis, including the stochastic Besov spaces and its interpolation properties to characterize the noises, and a class of locally refined variable stepsizes to resolve the singularity of the solution at $t=0$.
By using these new techniques, we have proved that the method has $\alpha$th-order convergence for initial data in $L^4(\Omega;H^\beta(\O))$ with $\beta\in(-1,\alpha]$, for a class of noises characterized by a parameter $\alpha\in(0,1]$, which includes trace-class noises (with $\alpha=1$) and one-dimensional space-time white noises (with $\alpha=\frac12$). 
The numerical results also support the theoretical analysis. 

{ In the numerical schemes of \eqref{FD-method-1} and \eqref{FD-method-2}, we have used variable stepsizes and modified the exponential integrator at the initial time level to address the singularity of the solution at $t=0$. This is needed when $\beta<0$ because the initial data $u^0$ may not be a pointwisely defined function and therefore the term $f(u^0)$ in the standard exponential integrator may not be pointwisely well-defined. 
In the case $0\le\beta\le \alpha$, the variable stepsize and the modification of the initial step may not be necessary. However, since the estimate of $\mathcal{E}_{4,1}^n$ in \eqref{estimate-E41} involves $t_{j-1}^{-(\alpha-\beta)/2}$, the estimation of this term at the initial step needs to be changed to a different way in the case $0\le\beta<\alpha$. 

In estimates \eqref{E2n-estimate-1}--\eqref{E2n-estimate-2}, we have used the additional assumption that in the case $\alpha=1$ the noise is trace class; see in Assumption \ref{thm-assumption} (3). This is only needed for the Fourier sine collocation method in \eqref{FD-method-2} with trigonometric interpolation. 
Theorem \ref{main-theorem-0} (for the spectral Galerkin method) still holds without requiring the noise to be trace class in the case $\alpha=1$. This is because that $e^{-(t_n-s)A}$ commutes with $P_M$ and therefore \eqref{E2n-estimate-1} can be estimated in the following different way: 
\begin{align}
\|\mathcal{E}_2^n\|_{L^2(\Om;L^2(\O))} 
&=\sum_{j=2}^n\int_{t_{j-1}}^{t_j}\big(P_M e^{-(t_n-s)A}f(U_M^{j-1})-e^{-(t_n-s)A}f(U_M^{j-1})\big)ds \notag\\
&\le
C\sum_{j=2}^n\int_{t_{j-1}}^{t_j}
M^{-1} \|e^{-(t_n-s)A}f(U_M^{j-1})\|_{L^2(\Om;\dot H^{1}(\O))} \d s \notag\\
&\le
C\sum_{j=2}^n\int_{t_{j-1}}^{t_j}M^{-1} (t_n-s)^{-\frac{1}{2}} \|f(U_M^{j-1})\|_{L^2(\Om;L^2(\O))}  \d s
 \notag\\
&\le
C\sum_{j=2}^n\int_{t_{j-1}}^{t_j}M^{-1} (t_n-s)^{-\frac{1}{2}} (\|f(0)\|_{L^2(\Om;L^2(\O))} 
+\|U_M^{j-1}\|_{L^2(\Om;L^2(\O))} ) \d s\notag
\end{align}
where we have used the smoothing property of the analytic semigroup $e^{-tA}$, i.e., 
$$
\|e^{-(t_n-s)A}g\|_{\dot H^{1}(\O)}\le C(t_n-s)^{-\frac12}\|g\|_{L^2(\O)} \quad\mbox{for}\,\,\, g\in L^2(\O) . 
$$
{From \eqref{fullyUM1} we see that   
\begin{align}\label{estimate-UM-1}
\|U_M^1\|_{L^2(\Om;L^2(\O))} 
=
\|e^{-\tau_1A}P_Mu^0\|_{L^2(\Om;L^2(\O))} 
&=
\|A^{-\frac{\beta}{2}}e^{-\tau_1A}P_Mu^0\|_{L^2(\Omega;\dot{H}^{\beta}(\O))}\notag\\
&\le
C(1+t_1^{\frac{\beta}{2}})\|u^0\|_{L^2(\Omega;\dot{H}^{\beta}(\O))} . 
\end{align}
From \eqref{fullyUMn} we can obtain the following expression of $U_M^n$ similarly as \eqref{expr-UMn} (with $I_M$ replaced by $P_M$ therein):
\begin{align*}
U_M^n 
&=
 e^{-(t_n-\tau_1) A}U_M^{1}
 +
\sum_{j=2}^n \int_{t_{j-1}}^{t_j}e^{-(t_n-s)A}P_Mf(U_M^{j-1})ds
 +
 \int_{t_1}^{t_n}e^{-(t_n-s)A}P_MdW(s) ,
\end{align*}
which implies that 
\begin{align*}
&\|U_M^n\|_{L^2(\Om;L^2(\O))} \\
&\le  
 \|e^{-(t_n-\tau_1) A}U_M^{1}\|_{L^2(\Omega;L^2(\O))}
+
\sum_{j=2}^n\tau_j \|f(U_M^{j-1})\|_{L^2(\Om;L^2(\O))} 
+t_n^{\frac{\alpha}{2}} 
\quad\mbox{(here \eqref{t-alpha} is used)} \\
&\le 
\|A^{-\frac{\beta}{2}}e^{-t_n A}P_Mu^0\|_{L^2(\Omega;\dot{H}^{\beta}(\O))}
+\sum_{j=2}^n\tau_j\left(\|U_M^{j-1}\|_{L^2(\Om;L^2(\O))} +\|f(0)\|_{L^2(\Om;L^2(\O))}\right)
+t_n^{\frac{\alpha}{2}}\\
&\le
(1+t_n^{\frac{\beta}{2}})\|u^0\|_{L^2(\Omega;\dot{H}^{\beta}(\O))}
+\sum_{j=2}^n\tau_j\|U_M^{j-1}\|_{L^2(\Om;L^2(\O))} \quad\quad\mbox{for}\,\,\, 2\le n\le N , 
\end{align*}
where we have used 
 the Lipschitz continuity of $f$ in the second to last inequality. 
Applying the discrete Gronwall inequality to this equation, together with equation \eqref{estimate-UM-1}, we can derive
\begin{align*}
\|U_M^n\|_{L^2(\Om;L^2(\O))} 
\le 
C(1+t_n^{\frac{\beta}{2}}) 
\quad\mbox{for}\,\,\, 1\le n\le N. 
\end{align*}
Therefore,}
\begin{align}\label{E2n-estimate-SpG} 
\|\mathcal{E}_2^n\|_{L^2(\Om;L^2(\O))} 
&\le
C\sum_{j=2}^n\int_{t_{j-1}}^{t_j}M^{-1} (t_n-s)^{-\frac{1}{2}} (1+t_{j-1}^{\frac{\beta}{2}} ) \d s  \notag\\
&\le
C\sum_{j=2}^n\int_{t_{j-1}}^{t_j}M^{-1} (t_n-s)^{-\frac{1}{2}} (1+s^{\frac{\beta}{2}} ) \d s  \notag\\
&\le
CM^{-1}.
\end{align}


For the Fourier sine collocation method in \eqref{FD-method-2}, if the noise is not trace class in the case $\alpha=1$, Theorem \ref{main-theorem} can still proved by using the inverse inequality (proof is omitted)
$$
\|P_Mu(t_{j-1})\|_{L^2(\Om;\dot H^{1}(\O))}
\le C(\ln M)^{\frac12} \|P_Mu(t_{j-1})\|_{B^\infty L^2(\Om;\dot H^{1}(\O))} . 
$$
This loses a logarithmic order of convergence in the case $\alpha=1$. 
}

\section*{Appendix: Proof of Proposition \ref{Regularity}}
\renewcommand{\theequation}{A.\arabic{equation}}
\renewcommand{\thelemma}{A.\arabic{lemma}}

\subsection*{A.1\, Existence and uniqueness}
We prove the existence and uniqueness of mild solutions by using the Banach fixed point theorem.

For $v\in X=\big\{v\in L^1\big(0,T; L^2(\Om;L^2(\O))\big): \sup\limits_{t\in (0,T]} (1+t^{\frac{\beta}{2}})^{-1}\|v(t)\|_{L^2(\Om;L^2(\O))}<\infty \big \}$
we define a nonlinear operator $M:X\to X$ by 
$$
Mv(t)
=
e^{-tA}u^0+\int_0^te^{-(t-s)A}f(v(s))\d s+\int_0^t e^{-(t-s)A}\d W(s) ,
$$
which is well-defined as  
\begin{align*}
\|Mv(t)\|_{L^2(\Om;L^2(\O))}
&\le 
C(1+t^{\frac{\beta}{2}}) \|u^0\|_{ L^2(\Omega;\dot{H}^{\beta}(\O))}
+
C\int_0^t (1+\|v(s)\|_{L^2(\Om;L^2(\O))}) \d s
+
Ct^{\frac{\alpha}{2}}\\
&\le
C(1+t^{\frac{\beta}{2}}) \|u^0\|_{ L^2(\Omega;\dot{H}^{\beta}(\O))}
+
C\int_0^t(1+s^{\frac{\beta}{2}}) \|v\|_{X} \d s
+
Ct^{\frac{\alpha}{2}} . 
\end{align*}
We consider the space $X_{\lambda}$, which is defined as the vector space $X$ with the equivalent norm 
$$
\|v\|_{X_{\lambda}} : =\sup_{t\in (0,T]}e^{-\lambda t}(1+t^{\frac{\beta}{2}})^{-1}\|v(t)\|_{L^2(\Om;L^2(\O))}, 
$$
 where $\lambda\ge 1$ is a fixed constant to be determined later. 
Therefore, $v\in X_{\lambda}$ if and only if $v\in X$. 
If $v_1,v_2\in X_{\lambda}$ then 
\begin{align*}
&e^{-\lambda t}(1+t^{\frac{\beta}{2}})^{-1}\|Mv_1(t)-Mv_2(t)\|_{L^2(\Om;L^2(\O))}\\
&\le 
{C}\int_0^t e^{-\lambda t} \| f(v_1(s)) - f(v_2(s)) \|_{L^2(\Om;L^2(\O))}\d s \\
 &\le
C\int_0^t e^{-\lambda (t-s)}(1+s^{\frac{\beta}{2}})  e^{-\lambda s} (1+s^{\frac{\beta}{2}})^{-1} \|v_1(s)-v_2(s)\|_{L^2(\Om;L^2(\O))}\d s\\
&\le 
C\|v_1-v_2\|_{X_{\lambda}} \int_0^t e^{-\lambda (t-s)}(1+s^{\frac{\beta}{2}}) \d s\\
&\le
C\|v_1-v_2\|_{X_{\lambda}}\int_{0}^{\lambda t}e^{-\lambda t +\delta}
(\lambda^{-1}+\lambda^{-\frac{\beta}{2}-1}\delta^{\frac{\beta}{2}})\d \delta \\
&\le 
C\|v_1-v_2\|_{X_{\lambda}}
\Big(\frac{1-e^{-\lambda t}}{\lambda}+
\lambda^{-\frac{\beta}{2}-1} \int_{0}^{1}e^{-\lambda t+\delta}\delta^{\frac{\beta}{2}}\d \delta 
+
\lambda^{-\frac{\beta}{2}-1} \int_{1}^{\max\{\lambda t,1\}}e^{-\lambda t+\delta}\delta^{\frac{\beta}{2}}\d \delta 
\Big)\\
&{\le 
C\|v_1-v_2\|_{X_{\lambda}}
\Big(
\lambda^{-1} 
+\lambda^{-\frac{\beta}{2}-1} \int_{0}^{1}\delta^{\frac{\beta}{2}}\d \delta 
+\lambda^{-\frac{\beta}{2}-1} \int_{1}^{\max\{\lambda t,1\}}e^{-\lambda t+\delta} \delta^{\frac{\beta}{2}}  \d \delta 
\Big).}
\end{align*}
Since
\begin{align*}
\lambda^{-\frac{\beta}{2}-1} 
\mbox{$\int_{1}^{\max\{\lambda t,1\}}$} e^{-\lambda t+\delta} \delta^{\frac{\beta}{2}}  \d \delta
&\le
\left\{\begin{aligned}
&\lambda^{-\frac{\beta}{2}-1} \mbox{$\int_{1}^{\max\{\lambda t,1\}}$} e^{-\lambda t+\delta}  \d \delta
&&\mbox{if}\,\,\, \beta\in(-1,0] \\
&\lambda^{-\frac{\beta}{2}-1} \mbox{$\int_{1}^{\max\{\lambda t,1\}}$} e^{-\lambda t+\delta} (\lambda t)^{\frac{\beta}{2}} \d \delta
&&\mbox{if}\,\,\, \beta\in(0,\alpha] 
\end{aligned}\right. \\
&\le
\left\{\begin{aligned}
&\lambda^{-\frac{\beta}{2}-1} 
&&\mbox{if}\,\,\, \beta\in(-1,0]  \\
&\lambda^{-1} 
&&\mbox{if}\,\,\, \beta\in(0,\alpha] ,
\end{aligned}\right.
\end{align*}
it follows that
\begin{align*}
e^{-\lambda t}(1+t^{\frac{\beta}{2}})^{-1}\|Mv_1(t)-Mv_2(t)\|_{L^2(\Om;L^2(\O))} 
&\le
C\lambda^{-\frac{\min(\beta,0)}{2}-1}\|v_1-v_2\|_{X_{\lambda}} \\
&\le
C\lambda^{-\frac{1}{2}}\|v_1-v_2\|_{X_{\lambda}} \quad\mbox{for}\,\,\, \beta\in(-1,\alpha] . 
\end{align*}
Therefore, $M$ is a contraction map on $X_{\lambda}$ when $\lambda$ is sufficiently large. This and the Banach fixed point theorem imply that there exists a unique fixed point of $M$ on $X_{\lambda}=X$. This fixed point of $M$ is denoted by $u$, which is the mild solution of problem \eqref{SPDE1}. 
 
\subsection*{A.2\, Regularity}
By using the expression of the mild solution in \eqref{mild-solution} and the property of the noise in \eqref{t-alpha}, we have 
\begin{align*}
\|u(t)\|_{L^p(\Om;L^2(\O))} 
\le&\
\|A^{-\frac{\beta}{2}}e^{-tA}A^{\frac{\beta}{2}}u^0\|_{L^p(\Om;L^2(\O))}+
\big\| \mbox{$\int_0^t$} e^{-(t-s)A}f(u(s))ds\big\|_{L^p(\Om;L^2(\O))} \\
&\
+
\big\|\mbox{$\int_0^t$} e^{-(t-s)A}dW(s)\big\|_{L^p(\Om;L^2(\O))}
\end{align*}
Since $e^{-tA}$ is an analytic semigroup, it follows that 
\begin{align*}
\|A^{-\frac{\beta}{2}}e^{-tA}A^{\frac{\beta}{2}}u^0\|_{L^p(\Om;L^2(\O))}
\le
\left\{
\begin{aligned}
&Ct^{\frac{\beta}{2}} \|A^{\frac{\beta}{2}}u^0\|_{L^p(\Om;L^2(\O))} &&\mbox{if}\,\,\, \beta\in(-1,0],\\
&C \|A^{\frac{\beta}{2}}u^0\|_{L^p(\Om;L^2(\O))} &&\mbox{if}\,\,\, \beta\in(0,\alpha] . 
\end{aligned}
\right.
\end{align*}
{ And with the help of \eqref{ito-isometry} and \eqref{BDG} it follows that 
\begin{align*}
\big\| \mbox{$\int_0^t$} e^{-(t-s)A}dW(s)\big\|_{L^p(\Om;L^2(\O))}
\le 
C\big\| \mbox{$\int_0^t$} e^{-(t-s)A}dW(s)\big\|_{L^2(\Om;L^2(\O))}
\end{align*}
holds for $p\ge 2$.
}
Therefore, 
\begin{align*}
\|u(t)\|_{L^p(\Om;L^2(\O))} 
\le
&\ 
C(1+t^{\frac{\beta}{2}})\|u^0\|_{ L^p(\Omega;\dot{H}^{\beta}(\O))}
+
\mbox{$\int_0^t$} \|f(u(s))\|_{L^p(\Om;L^2(\O))}\d s+Ct^{\frac{\alpha}{2}} \\
\le&\ 
C(1+t^{\frac{\beta}{2}})
+
\mbox{$\int_0^t$} ({\|f(0)\|_{L^p(\Om;L^2(\O))}}
 + C\|u(s)\|_{L^p(\Om;L^2(\O))}) \d s+Ct^{\frac{\alpha}{2}} \\
\le&\ 
C(1+t^{\frac{\beta}{2}})
+
\mbox{$\int_0^t$} C\|u(s)\|_{L^p(\Om;L^2(\O))} \d s .
\end{align*}
Then applying Gronwall's inequality (see \cite[Lemma 7.1.1]{Daniel-1981}) yields \eqref{u-L2-estimate}.  

By applying the projection operator $\Pi_j$ to \eqref{mild-solution} and considering the result in the $L^p(\Omega;\dot{H}^{\alpha}(\mathcal{O}))$ norm, we have  
\begin{align*}
&\|\Pi_ju(t)\|_{L^p(\Omega;\dot{H}^{\alpha}(\mathcal{O}))} 
=
\|A^{\frac{\alpha}{2}}\Pi_ju(t)\|_{L^p(\Omega;L^2(\mathcal{O}))}\notag\\
&\le
\|A^{\frac{\alpha-\beta}{2}} e^{-tA}A^{\frac{\beta}{2}}\Pi_ju^0\|
_{L^p(\Omega;L^2(\mathcal{O}))}
\notag\\
&\quad+
\Big\|\int_0^{t}A^{\frac{\alpha}{2}}e^{-(t-s)A}\Pi_jf(u(s)) \d s\Big\|_{L^p(\Omega;L^2(\mathcal{O}))}
+
\Big\|\Pi_j\int_0^{t}A^{\frac{\alpha}{2}}e^{-(t-s)A}\d W(s)\Big\|_{L^p(\Omega;L^2(\mathcal{O}))}\notag\\
&\le
Ct^{-\frac{\alpha-\beta}{2}}\|A^{\frac{\beta}{2}}\Pi_ju^0\|_{ L^p(\Omega;L^2(\mathcal{O}))}\\
&\quad
+
C\int_0^{t}(t-s)^{-\frac{\alpha}{2}}\|f(u(s))\|_{L^p(\Omega;L^2(\mathcal{O}))} \d s 
+
\Big\|\Pi_j\int_0^{t}e^{-(t-s)A}\d W(s)\Big\|_{L^p(\Omega;\dot H^{\alpha}(\mathcal{O}))} \\
&\le
Ct^{-\frac{\alpha-\beta}{2}}\|u^0\|_{ L^p(\Omega;\dot H^{\beta}(\mathcal{O}))}\\
&\quad
+
C\int_0^{t}(t-s)^{-\frac{\alpha}{2}} (1+s^{\frac{\beta}{2}}) \d s 
+
\Big\|\Pi_j\int_0^{t}e^{-(t-s)A}\d W(s)\Big\|_{L^p(\Omega;\dot H^{\alpha}(\mathcal{O}))} \\
& \quad\mbox{(here we use the Lipschitz continuity of $f$ and \eqref{u-L2-estimate}, which is already proved)} \\
&\le
C t^{-\frac{\alpha-\beta}{2}} + C(t^{1-\frac{\alpha}{2}}+t^{1-\frac{\alpha-\beta}{2}}) + C, 
\end{align*}
where the last inequality uses { assumptions \eqref{t-alpha}--\eqref{mu-C}}. 
Then, by taking maximum in the above inequality among all $j\ge 1$, we obtain \eqref{B-Inf-estimate}. 

Next we prove that $u\in  C\big((\varepsilon,T]; L^p(\Om;L^2(\O))\big)$. Obviously,
by \eqref{Noise-Expansion} for $0<t_2<t_1\le T$ there hold
\begin{align}
&\Big\|\int_0^{t_2} (e^{-(t_1-s)A}-e^{-(t_2-s)A})\d W(s)\Big\|_{L^p(\Om;L^2(\O))}^2\notag\\
&\le
C\int_0^{t_2} \sum_{k=1}^{\infty}\mu_k\|(e^{-(t_1-s)A}-e^{-(t_2-s)A})\phi_k\|_{L^2(\O)}^2\d s\notag\\
&\le
C\sum_{k=1}^{\infty}\mu_k(e^{-t_1\lambda_k}-e^{-t_2\lambda_k})^2\frac{e^{2t_2\lambda_k}-1}{2\lambda_k} \label{e-m-e-e}\\
&\le{
C\sum_{k=1}^{\infty}\frac{\mu_k}{\lambda_k}(1-e^{-2(t_1-t_2)\lambda_k})}\notag\\
&\le 
C \big\| \mbox{$\int_0^{t_1-t_2}$} e^{-(t_1-t_2-s)A}\d W(s) \big\|_{L^2(\Omega;L^2(\mathcal{O}))}^2\notag\\
&\le 
C(t_1-t_2)^{\alpha}\notag
\end{align}
and 
\be\label{noise-equiv}
\begin{aligned}
\big\|\mbox{$\int_{t_2}^{t_1}$} e^{-(t_1-s)A}\d W(s)\big\|_{L^p(\Om;L^2(\O))}^2
&\le C 
\mbox{$\int_0^{t_1-t_2}$}\sum_{k=1}^{\infty}\mu_k\|e^{-(t_1-t_2-\sigma)A}\phi_k\|_{L^2(\O)}^2\d \sigma\\
&\sim
\big\|\mbox{$\int_{0}^{t_1-t_2}$}e^{-(t_1-t_2-s)A}\d W(s)\big\|_{L^2(\Om;L^2(\O))}^2\\
&\sim
(t_1-t_2)^{\alpha} , 
\end{aligned}
\en
where the last inequality is due to item (3) in Assumption \ref{thm-assumption}. 
Combining these estimates with \eqref{u-L2-estimate},
we derive for $0<\varepsilon\le t_2<t_1\le T$ and $0<\delta<1$ that 
\begin{align*}
&\|u(t_1)-u(t_2)\|_{L^p(\Om;L^2(\O))}\\
&\le
\|e^{-t_1A}u^0-e^{-t_2A}u^0\|_{L^p(\Om;L^2(\O))}\\
&\quad+
\big\|\mbox{$\int_0^{t_2}$}(e^{-(t_1-s)A}-e^{-(t_2-s)A})f(u(s))\d s
+
\mbox{$\int_{t_2}^{t_1}$}e^{-(t_1-s)A}f(u(s))\d s\big\|_{L^p(\Om;L^2(\O))}\\
&\quad+
\big\|\mbox{$\int_0^{t_2}$} (e^{-(t_1-s)A}-e^{-(t_2-s)A})\d W(s)+\mbox{$\int_{t_2}^{t_1}$} e^{-(t_1-s)A}\d W(s)\big\|_{L^p(\Om;L^2(\O))}\\
&\le
\|A^{-\frac{\beta}{2}+\delta}e^{-t_2A}A^{-\delta}(e^{-(t_1-t_2)A}-I)A^{\frac{\beta}{2}}u^0\|_{L^p(\Om;L^2(\O))} \\
&\quad+
\mbox{$\int_0^{t_2}$} \|A^{\delta}e^{-(t_2-s)A}A^{-\delta}(e^{-(t_1-t_2)A}-I)f(u(s))\|_{L^p(\Om;L^2(\O))}\d s \\
&\quad+
C{\mbox{$\int_{t_2}^{t_1}$} (1+s^{\frac{\beta}{2}})\d s}\\
&\quad+
\big\|\mbox{$\int_0^{t_2}$} (e^{-(t_1-s)A}-e^{-(t_2-s)A})\d W(s)\big\|_{L^p(\Om;L^2(\O))}+\big\|\mbox{$\int_{t_2}^{t_1}$} e^{-(t_1-s)A}\d W(s)\big\|_{L^p(\Om;L^2(\O))}\\
&\le
C{(t_2^{\frac{\beta}{2}-\delta}+1)}(t_1-t_2)^{\delta}\|u^0\|_{ L^p(\Omega;\dot{H}^{\beta}(\O))}
\\
&\quad +
\mbox{$\int_0^{t_2}$} (t_2-s)^{-\delta}(t_1-t_2)^{\delta}{(1+s^{\frac{\beta}{2}})} \d s
 + C(t_1-t_2 + t_1^{1+\frac{\beta}{2}}-t_2^{1+\frac{\beta}{2}})\\
&\quad+
C(t_1-t_2)^{\frac{\alpha}{2}}\\
&\le
C{(\varepsilon^{\frac{\beta}{2}-\delta}+1)}(t_1-t_2)^{\delta} 
+ 
C(t_1-t_2)^{1+\frac{\min\{0,\beta\}}{2}} 
+C(t_1-t_2)^{\frac{\alpha}{2}}. 
\end{align*} 
This means $u\in  C^{\delta}\big([\varepsilon,T]; L^p(\Om;L^2(\O))\big)$ for $\delta\in\big(0,{\min\{1+\frac{\min\{0,\beta\}}{2},\frac{\alpha}{2}\}}\big)$.
The last two terms in the inequality above indicate that the second and third terms in expression \eqref{mild-solution} are in $C([0,T]; L^2(\Omega; L^2(\O)))$.
{Provided $\bar{\beta}=\min\{0,\beta\}$,} 
the first term in expression \eqref{mild-solution} is clearly in 
$C([0,T];L^2(\Omega;\dot H^{\bar{\beta}}(\O)))$
 because $e^{-tA}$ is a { strongly continuous} 
semigroup on $\dot H^{\bar{\beta}}(\O)$. As a result, the mild solution $u$ is in $C([0,T];L^2(\Omega;\dot H^{\bar{\beta}}(\O)))$. 
This completes the proof of Proposition \ref{Regularity}.  

%
%

\endproof

\bibliographystyle{abbrv}
\bibliography{Gui_Bibtex_SPDE}
\end{document}